\documentclass[10pt]{amsart}
\usepackage{amssymb,amscd}
\usepackage[all,cmtip]{xy}
\usepackage{graphics}
\usepackage{epic}
\usepackage{url}
\usepackage[T2A]{fontenc}
\usepackage[english]{babel}

\usepackage[utf8x]{inputenc}
\usepackage{multirow}

\usepackage{algorithm}
\usepackage{algpseudocode}
\usepackage{longtable}
\usepackage{amsmath}
\usepackage{slashbox}

\usepackage{xcolor}

\newtheoremstyle{observation}
  {\topsep}
  {\topsep}
  {}
  {}
  {\bfseries}
  {.}
  {.5em}
  {}
\theoremstyle{observation}

\newtheorem{theorem}{Theorem}[section]
\newtheorem{lemma}[theorem]{Lemma}
\newtheorem{proposition}[theorem]{Proposition}
\newtheorem{claim}[theorem]{Claim}
\newtheorem{corollary}[theorem]{Corollary}

\newtheorem{definition-lemma}[theorem]{Definition-Lemma}
\newtheorem{definition}[theorem]{Definition}

\newtheorem{remark}[theorem]{Remark}

\def\CC{{\mathbb C}}
\def\DD{{\mathbb D}}

\def\FF{{\mathbb F}}

\def\QQ{{\mathbb Q}}

\def\ZZ{{\mathbb Z}}
\def\cO{{\mathcal O}}


\newcommand\Hom{\operatorname{Hom}}

\newcommand\Spec{\operatorname{Spec}}

\begin{document}

\title[Computing algorithm for reduction type of CM Abelian varieties]{Computing algorithm for reduction type of CM Abelian varieties}

\author{Artyom Smirnov and Alexey Zaytsev}

\email{asmirnov1005@gmail.com, al.zaytsev@skoltech.ru, azaytsev@sk.ru}
\address{Skolkovo Institute of Science and Technology,
Nobelya Ulitsa 3.
Moscow, Russia} 

\maketitle

\begin{abstract}
	Let $\mathcal{A}$ be an Abelian variety over a number field, with a good reduction at a prime ideal containing a prime number $p$. Denote by ${\rm A}$ an Abelian variety over a finite field of characteristic $p$, obtained by the reduction of $\mathcal{A}$ at the prime ideal. In this paper we derive an algorithm which allows to decompose the group scheme ${\rm A}[p]$ into indecomposable quasi-polarized ${\rm BT}_1$-group schemes. This can be done for the unramified $p$ on the basis of its decomposition into prime ideals in the endomorphism algebra of ${\rm A}$. We also compute all types of such correspondence for Abelian varieties of dimension up to $5$.  As a consequence we establish the relation between the decompositions of prime $p$ and the corresponding pairs of $p$-rank and $a$-number of an Abelian variety ${\rm A}$.	
\end{abstract}

\begin{section}{Introduction and Results}\label{Introduction}
	
	Let $\mathcal{A}$ be a simple Abelian variety of dimension $g$ that has a complex multiplication by the full ring of integers $\cO_K$ of CM field $K$. Assume that $\mathcal{A}$ has a good reduction at prime $\mathcal{P}$ of $\cO_K$, denoted by $\rm A$. In addition, we suppose that prime $p = \mathcal{P} \cap \ZZ$ is unramified in $K$.

	In this paper we produce a general algorithm for the calculation of the correspondences between the decompositions of $p$ into the prime ideals in $\cO_K$ and the decompositions of the group scheme ${\rm A}[p]$ into the quasi-polarized indecomposable ${\rm BT}_1$-group schemes. The question of which non-isogenic decompositions of scheme ${\rm A}[p]$ can correspond to one or another decomposition of $p$ has already been considered earlier in \cite{Blake} and \cite{Sugiyama}. But we are considering the admissible decompositions of ${\rm A}[p]$ up to isomorphism. This work extends an approach of \cite{Zaytsev} to higher dimensions. In particular, in the section \ref{Algorithm_Description} we obtain the tables with explicit correspondences for dimensions up to~$5$.
		
	We start with overview of basic results on group schemes and the classification of quasi-polarized indecomposable ${\rm BT}_1$-group schemes of a given dimension (see \ref{Preliminaries_and_Classification}).Then we provide a theoretical justification of the algorithm and we consider a pseudocode along with an example of the algorithm application. The results of the program implementation are presented in the tables at the end of the section \ref{Algorithm_Description} which improve the results in \cite{Zaytsev}.

	Analysis of these results yields to the following connection between the parameters of Abelian variety ${\rm A}$ and the decomposition of $p$ into prime ideals. 
	Let $f$ be $p$-rank and $a$ be $a$-number corresponding to Abelian variety ${\rm A}$. Suppose that $K_0$ is a totally real subfield of $K$ with the full ring of integers $\cO_{K_0}$. For convenience we introduce the notion of \textbf{decomposition type} of prime number $p$ in CM field $K$. Namely, it is a pair of non-negative integers $(\alpha, \beta)$, such that $p \cO_K = \mathcal{P}_1 ... \mathcal{P}_{\alpha}$ and $p \cO_{K_0} = \mathcal{Q}_1 ... \mathcal{Q}_{\beta}$. Then the following claim holds.
	
	\begin{claim}\label{Main_Theorem_for_parameters}
		Let $\mathcal{A}$ be a simple Abelian variety of dimension $g$ over a number field $L$ with complex multiplication by the full ring of integers $\cO_K$ of CM field $K \subset L$. Assume that $\mathcal{A}$ has a good reduction at prime $\mathcal{P}$ of $\cO_K$ and $p = \mathcal{P} \cap \ZZ$ is unramified in $K$. Let $f$ be $p$-rank and $a$ be $a$-number of $\mathcal{A}$ and $(\alpha, \beta)$ be the decomposition type of $p$ in $K$. Then all the possible correspondences between pairs $(a, f)$ and  $(\alpha, \beta)$ arising in such a way can be described by the following table.
	  
  	\scriptsize
  	\begin{center}
  	\begin{tabular}{|c|c|ccccc}
  
  	\hline
  
  	\backslashbox{$f$}{$a$} & $0$ & \multicolumn{1}{|c|}{$1$} & \multicolumn{1}{|c|}{$2$} & \multicolumn{1}{|c|}{$3$} & \multicolumn{1}{|c|}{$4$} & \multicolumn{1}{|c|}{$5$} \\
  
  	\hline
  
  	\multirow{5}{*}{$0$} & \multirow{5}{*}{} & \multicolumn{1}{|c|}{\multirow{5}{*}{$(1, 1)$}} & \multicolumn{1}{|c|}{\multirow{5}{*}{$(2, 1)$, $(2, 2)$}} & \multicolumn{1}{|c|}{} & \multicolumn{1}{|c|}{} & \multicolumn{1}{|c|}{$(1, 1)$} \\
  	& & \multicolumn{1}{|c|}{} & \multicolumn{1}{|c|}{} & \multicolumn{1}{|c|}{$(1, 1)$} & \multicolumn{1}{|c|}{$(2, 1)$, $(2, 2)$} & \multicolumn{1}{|c|}{} \\
  	& & \multicolumn{1}{|c|}{} & \multicolumn{1}{|c|}{} & & \multicolumn{1}{|c|}{} & \multicolumn{1}{|c|}{$(3, 2)$, $(3, 3)$} \\
  	& & \multicolumn{1}{|c|}{} & \multicolumn{1}{|c|}{} & \multicolumn{1}{|c|}{$(3, 2)$, $(3, 3)$} & \multicolumn{1}{|c|}{$(4, 2)$, $(4, 3)$, $(4, 4)$} & \multicolumn{1}{|c|}{} \\
  	& & \multicolumn{1}{|c|}{} & \multicolumn{1}{|c|}{} & \multicolumn{1}{|c|}{} & \multicolumn{1}{|c|}{} & \multicolumn{1}{|c|}{$(5, 3)$, $(5, 4)$, $(5, 5)$} \\
  
  	\hline
  
  	\multirow{2}{*}{$1$} & \multirow{2}{*}{$(2, 1)$} & \multicolumn{1}{|c|}{\multirow{2}{*}{$(3, 2)$}} & \multicolumn{1}{|c|}{\multirow{2}{*}{$(4, 2)$, $(4, 3)$}} & \multicolumn{1}{|c|}{$(3, 2)$} & \multicolumn{1}{|c|}{$(4, 2)$, $(4, 3)$} & \\
  	& & \multicolumn{1}{|c|}{} & \multicolumn{1}{|c|}{} & \multicolumn{1}{|c|}{$(5, 3)$, $(5, 4)$} & \multicolumn{1}{|c|}{$(6, 3)$, $(6, 4)$, $(6, 5)$} & \\ \cline{1-6}
  
  	\multirow{2}{*}{$2$} & $(2, 1)$ & \multicolumn{1}{|c|}{$(3, 2)$} & \multicolumn{1}{|c|}{$(4, 2)$, $(4, 3)$} & \multicolumn{1}{|c|}{$(5, 3)$, $(5, 4)$} & & \\
  	& $(4, 2)$ & \multicolumn{1}{|c|}{$(5, 3)$} & \multicolumn{1}{|c|}{$(6, 3)$, $(6, 4)$} & \multicolumn{1}{|c|}{$(7, 4)$, $(7, 5)$} & & \\ \cline{1-5}
  
  	\multirow{3}{*}{$3$} & $(2, 1)$ & \multicolumn{1}{|c|}{$(3, 2)$} & \multicolumn{1}{|c|}{$(4, 2)$, $(4, 3)$} & & & \\
  	& $(4, 2)$ & \multicolumn{1}{|c|}{$(5, 3)$} & \multicolumn{1}{|c|}{$(6, 3)$, $(6, 4)$} & & & \\
  	& $(6, 3)$ & \multicolumn{1}{|c|}{$(7, 4)$} & \multicolumn{1}{|c|}{$(8, 4)$, $(8, 5)$} & & & \\ \cline{1-4}
  
  	\multirow{4}{*}{$4$} & $(2, 1)$ & \multicolumn{1}{|c|}{$(3, 2)$} & & & & \\
  	& $(4, 2)$ & \multicolumn{1}{|c|}{$(5, 3)$} & & & & \\
  	& $(6, 3)$ & \multicolumn{1}{|c|}{$(7, 4)$} & & & & \\
  	& $(8, 4)$ & \multicolumn{1}{|c|}{$(9, 5)$} & & & & \\ \cline{1-3}
  
  	\multirow{5}{*}{$5$} & $(2, 1)$ & & & & & \\
  	& $(4, 2)$ & & & & & \\
  	& $(6, 3)$ & & & & & \\
  	& $(8, 4)$ & & & & & \\
  	& $(10, 5)$ & & & & & \\ \cline{1-2}
  
  	\end{tabular}
  	\end{center}
  	\normalsize
	\end{claim}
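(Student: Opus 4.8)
The plan is to reduce the statement to a finite combinatorial enumeration by passing to Dieudonné modules modulo $p$. Since $p$ is unramified in $K$, the contravariant Dieudonné module $M$ of ${\rm A}[p]$ is a $k$-vector space of dimension $2g$ which decomposes, compatibly with $F$ and $V$, as $M = \bigoplus_{\mathcal{P} \mid p} M_{\mathcal{P}}$ indexed by the primes of $\mathcal{O}_K$ above $p$, where $M_{\mathcal{P}}$ has dimension equal to the residue degree $f_{\mathcal{P}} = [\mathcal{O}_K/\mathcal{P} : \FF_p]$. Each $M_{\mathcal{P}}$ is the ``cyclic word'' module on the Frobenius orbit of embeddings inducing $\mathcal{P}$: labelling each embedding by $F$ or $V$ according to whether it lies in the CM type $\Phi$ (this is exactly the Shimura--Taniyama recipe for the slopes), one obtains a circular word of length $f_{\mathcal{P}}$ in $\{F,V\}$ determining $M_{\mathcal{P}}$ up to isomorphism. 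The quasi-polarization pairs $M_{\mathcal{P}}$ with $M_{\bar{\mathcal{P}}}$, so that the indecomposable quasi-polarized ${\rm BT}_1$-pieces are governed by the behaviour of $\mathfrak{q} = \mathcal{P} \cap \mathcal{O}_{K_0}$ in $K/K_0$, matching the pieces classified in Section~\ref{Preliminaries_and_Classification}.

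The second step is to read off $f$ and $a$ from these words. I would record two elementary rules: the slope-$0$ multiplicity of $M_{\mathcal{P}}$ equals $f_{\mathcal{P}}$ when its word is the constant word $V\cdots V$ and is $0$ otherwise, and the local $a$-number of $M_{\mathcal{P}}$ equals the number of cyclically consecutive occurrences of the factor $FV$ in its word. Splitting the primes $\mathfrak{q}_1,\dots,\mathfrak{q}_{\beta}$ of $\mathcal{O}_{K_0}$ above $p$ into the $s = \alpha - \beta$ primes that split in $K$ and the $t = 2\beta - \alpha$ primes that stay inert, this gives a clean per-prime contribution to $(f,a)$: a split prime with extremal CM type contributes $(f_{\mathfrak{q}},0)$; a split prime with non-extremal CM type contributes $(0,a')$ with $a'$ even and $2 \le a' \le 2\lfloor f_{\mathfrak{q}}/2\rfloor$; and an inert prime contributes $(0,a')$ with $a'$ odd and $1 \le a' \le f_{\mathfrak{q}}$. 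The evenness for split primes follows from $a(M_{\mathcal{P}}) = a(M_{\bar{\mathcal{P}}})$, and the oddness for inert primes from the anti-symmetry $w_{i+f_{\mathfrak{q}}} = \bar{w}_i$ forced by complex conjugation, which one checks gives $\#\{FV\} \equiv 1 \pmod 2$. In particular this already yields the structural constraints $f + a \le g$, the implication $a = 0 \Rightarrow f = g$, and the congruence $\alpha \equiv a \pmod 2$ that explain the gross shape of the table.

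With the dictionary in place, the final step is the enumeration: for each $g \le 5$ I would list all partitions $g = f_1 + \dots + f_{\beta}$ together with every split/inert assignment and every admissible CM type, compute the resulting $(f,a)$ as the sum of the local contributions above, and record the associated decomposition type $(\alpha,\beta) = (\beta + s,\beta)$. Collecting, for each value of $(f,a)$, the set of $(\alpha,\beta)$ that occurs then produces the table; conversely each listed $(\alpha,\beta)$ is realized by exhibiting one CM field with the prescribed splitting of $p$ together with a compatible $\Phi$. This is precisely the output of the algorithm of Section~\ref{Algorithm_Description}, so the proof of the Claim amounts to reorganizing those tables by the invariant $(f,a)$ rather than by $(\alpha,\beta)$.

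The main obstacle is making the enumeration exhaustive and correct rather than the individual computations: one must be sure that the list of admissible local words (equivalently, of quasi-polarized indecomposable ${\rm BT}_1$-pieces with their $(f,a)$) is complete in each residue degree up to $5$, that every combination is genuinely realized by some CM type on a CM field with the prescribed decomposition group of $p$, and that no two distinct configurations are conflated after passing to isomorphism classes. Matching the combinatorial words to the classification of Section~\ref{Preliminaries_and_Classification} is what guarantees this completeness.
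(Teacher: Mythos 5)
Your strategy is genuinely different from the paper's. The paper does not argue prime by prime: it runs the group-theoretic search of Algorithm~1 over Dodson's finite list of Galois groups $G={\rm Gal}(\tilde K/\QQ)$ for $g\le 5$, over central involutions $\iota$, subgroups $\Delta$, \emph{primitive} CM types $S^1$, and elements $\sigma\in G$, reading off $(\alpha,\beta)$ from double cosets and the circular words from the $\sigma$-orbits on $\Delta\backslash G$; the table in the Claim is then obtained by reorganizing that computer output by $(f,a)$. Your local dictionary is essentially correct as far as it goes: an inert prime of $\cO_{K_0}$ gives a word of length $2f_{\mathfrak q}$ that is anti-symmetric under the half-shift, and such words do have an odd number of $\mathcal{FV}$-factors; split pairs contribute even $a$; hence $a\equiv\alpha\ ({\rm mod}\ 2)$, $f+a\le g$, and $a=0\Rightarrow f=g$. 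This is a more conceptual organization than the paper's enumeration.

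However, there is a genuine gap at the decisive step, namely your assertion that the local enumeration ``is precisely the output of the algorithm of Section~4.'' That identification is exactly what needs proof, and it is not automatic, because two global constraints are invisible to a prime-by-prime count: the CM type must be primitive (the Claim concerns \emph{simple} $\mathcal{A}$), and the prescribed splitting-plus-word data must be realized by some $(G,\iota,\Delta,S^1,\sigma)$ with $G$ on Dodson's list; your one-sentence ``conversely each listed $(\alpha,\beta)$ is realized by exhibiting one CM field'' glosses over precisely this content (note the paper itself explicitly declines to claim realizability -- its algorithm only eliminates impossible decompositions). The discrepancy is not hypothetical: carrying out your recipe for $g=5$ with $5=2+3$, taking the degree-$2$ prime of $\cO_{K_0}$ split in $K$ with extremal local type (contribution $(f,a)=(2,0)$) and the degree-$3$ prime inert in $K$ with alternating word $[(\mathcal{FV})^3]$ (contribution $(0,3)$, group scheme ${\rm I}_{1,1}^3$), your rules output $(f,a)=(2,3)$ with $(\alpha,\beta)=(3,2)$, i.e.\ ${\rm A}[p]\cong(\mu_p\times\ZZ/p\ZZ)^2\times{\rm I}_{1,1}^3$; but the table's $(f,a)=(2,3)$ cell contains only $(5,3)$, $(5,4)$, $(7,4)$, $(7,5)$. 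So either some global obstruction kills this configuration -- and your argument identifies none; primitivity does not do it, since for a degree-$10$ CM field whose Galois closure has the maximal group of order $3840$ from the paper's list there is no proper CM subfield, every CM type is primitive, and one can choose $\sigma$ with orbit type $(2,2,6)$ realizing exactly this pattern -- or the printed table itself would have to be amended. In either case the proposal as written does not establish the Claim: the matching of the local enumeration with the stated table is the whole difficulty, and it is left unproved (and, as the example shows, it fails at face value).
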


\begin{subsection}*{Acknowledgment}
The first author would like to thank Dr. C. Diem for his hospitality and the valuable remarks during his visit to Leipzig University for the DAAD program. This program was organized thanks to Prof. Dr. M. Droste with the assistance of Dr. Sergey Aleshnikov and Yuri Boltnev. Both authors would like to thank Anastasia Svalova for editing the translation.
\end{subsection}

\end{section}

\begin{section}{Preliminaries and Classification of Quasi-Polarized Indecomposable ${\rm BT}_1$-Group Schemes up to Dimension $5$}\label{Preliminaries_and_Classification}

	In this section we give a brief overview of the category of finite commutative group schemes over a perfect field $k$ with ${\rm char(k)}=p>0$. We also provide the classification of quasi-polarized indecomposable ${\rm BT}_1$-group schemes over perfect fields up to dimension $5$ via circular words. For more detailed information about the category of finite commutative group schemes over a perfect field we refer to \cite{Kraft}, \cite{Oort} and \cite{Pink}.

	\begin{subsection}{Decomposition of the Category}
	
		Let $k$ be a perfect field of characteristic $p>0$. Denote by $C=C_{k}$ the category of finite commutative group schemes over $\Spec(k)$. By definition, a finite scheme $G$ over $\Spec(k)$ is affine. Therefore category $C$ is equivalent to the category of commutative finitely generated $k$-bialgebras (which are automatically flat).

		\begin{proposition}
			The connected component $G^0$ of a zero section in a finite commutative group scheme $G$ is a closed finite commutative subgroup scheme and the quotient $G/G^0$ is \'etale.
		\end{proposition}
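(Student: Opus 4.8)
The plan is to pass to the algebraic side and to exploit the structure theory of Artinian rings. Write $G=\Spec A$, where $A$ is a finite-dimensional commutative $k$-bialgebra, and let $\varepsilon\colon A\to k$ denote the counit corresponding to the zero section. As a finite-dimensional, hence Artinian, $k$-algebra, $A$ decomposes uniquely as a finite product $A=\prod_i A_i$ of local Artinian $k$-algebras, the factors corresponding to the primitive idempotents of $A$, equivalently to the connected components of $G$. I would let $A_0$ be the factor through which $\varepsilon$ factors and set $G^0=\Spec A_0$. Being cut out by an idempotent, $G^0$ is at once open and closed in $G$; in particular it is a closed, finite subscheme, and commutativity is inherited from $G$. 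Since $\varepsilon$ exhibits the image of the zero section as a $k$-rational point, the residue field of the local ring $A_0$ is $k$ itself, so that $A_0=k\oplus\mathfrak m_0$ with $\mathfrak m_0$ nilpotent.

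Next I would show that the group laws restrict to $G^0$. The key point is that $G^0\times_k G^0=\Spec(A_0\otimes_k A_0)$ is connected: because $A_0$ has residue field $k$, the reduction of $A_0\otimes_k A_0$ modulo its nilradical is $k\otimes_k k=k$, so $A_0\otimes_k A_0$ is local and hence $G^0\times_k G^0$ is connected. The multiplication $m\colon G\times_k G\to G$ then carries the connected set $G^0\times_k G^0$ to a connected subset of $G$ containing $e\cdot e=e$, which therefore lies in the connected component $G^0$. The same continuity argument, applied to the inversion morphism and to the zero section, shows that both preserve $G^0$. Hence the group operations restrict, and $G^0$ is a closed, finite, commutative subgroup scheme.

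For the quotient I would use that $k$ is perfect. Each residue field $\kappa_i=A_i/\mathfrak m_i$ is then a finite separable extension of $k$, so each local factor $A_i$ admits a unique coefficient field $\kappa_i\hookrightarrow A_i$ lifting the reduction. Their product $\pi_0(A)=\prod_i\kappa_i$ is the maximal \'etale $k$-subalgebra of $A$, and $\pi_0(G)=\Spec\pi_0(A)$ is the \'etale scheme of connected components. Because the formation of $\pi_0$ is compatible with tensor products over a perfect field, $\pi_0(A\otimes_k A)=\pi_0(A)\otimes_k\pi_0(A)$, the comultiplication of $A$ restricts to $\pi_0(A)$, making it a sub-bialgebra. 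The inclusion $\pi_0(A)\hookrightarrow A$ then corresponds to a faithfully flat quotient homomorphism $G\to\pi_0(G)$; computing its kernel as the fiber over the identity and matching it with the factor $A_0$ identifies this kernel with $G^0$. Thus $G/G^0\cong\pi_0(G)$ is \'etale.

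The hard part will be the last step, namely producing the \'etale quotient and proving it is \'etale. Two points require care: verifying that the maximal \'etale subalgebra is stable under comultiplication---this is exactly where perfectness of $k$ is indispensable, via the identity $\pi_0(A\otimes_k A)=\pi_0(A)\otimes_k\pi_0(A)$---and checking that the kernel of $G\to\pi_0(G)$ is precisely $G^0$ rather than some larger infinitesimal subgroup. Everything else reduces to the elementary topology of $\Spec$ of an Artinian ring together with the decomposition into local factors.
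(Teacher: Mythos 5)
The paper itself offers no proof of this proposition: it is stated in the preliminaries as a standard structural fact, with the reader referred to Kraft, Oort, and Pink for details. Your argument is correct, and it is essentially the canonical proof found in those references (and in Waterhouse's book on affine group schemes): decompose the Artinian bialgebra $A$ into local factors, note that $A_0$ has residue field $k$ because the identity is a $k$-rational point, deduce that $A_0\otimes_k A_0$ is local (its maximal ideal $\mathfrak m_0\otimes A_0+A_0\otimes\mathfrak m_0$ is nilpotent with quotient $k$) so that $G^0\times_k G^0$ is connected and the group law restricts, and then realize the \'etale quotient as $\Spec$ of the maximal \'etale subalgebra $\pi_0(A)=\prod_i\kappa_i$. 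The two points you flag as delicate are indeed the crux, and your treatment of both is sound: stability of $\pi_0(A)$ under comultiplication follows from $\pi_0(A\otimes_k A)=\pi_0(A)\otimes_k\pi_0(A)$, the kernel computation reduces to $A\otimes_{\pi_0(A)}k\cong A_0$, and faithful flatness holds because each $A_i$ is free over its coefficient field $\kappa_i$. One small remark: perfectness of $k$ enters precisely where you say it does --- it guarantees that the residue fields $\kappa_i$ are separable over $k$, hence that $\prod_i\kappa_i$ is \'etale and that the coefficient fields exist --- which is consistent with the paper's standing assumption that $k$ is perfect (over an imperfect field the connected--\'etale sequence still exists, but $\pi_0(A)$ can be strictly smaller than $\prod_i\kappa_i$, so your particular construction genuinely needs the hypothesis).
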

	
		Now we can introduce the following classes of finite commutative group schemes.

		\begin{definition}
			A finite commutative group scheme $G$ over $\Spec(k)$ is called
			\begin{itemize}
				\item{{\bf \'etale}, if the structure morphism $G \to \Spec(k)$ is \'etale,}
				\item{{\bf local}, if $G$ is connected.}
			\end{itemize}
			\end{definition}
	
		We will write $C_{loc}$ for the full subcategory of $C$ consisting of all $G \in C$ which are local and $C_{et}$ for the full subcategory of $C$ consisting of all $G \in C$ which are \'etale.
		
		An important observation is the following.
		\begin{lemma}
			Let $G \in G_{loc}$ and $H \in C_{et}$. Then
			\[
				{\rm Hom}_{C}(G,H)={\rm Hom}_{C}(H,G)=(0).
			\]
	
		\end{lemma}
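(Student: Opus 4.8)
The plan is to prove both vanishing statements by passing from a morphism of group schemes to the induced homomorphism of coordinate $k$-algebras and then exploiting the complementary ring-theoretic natures of the two classes: the coordinate ring $\mathcal{O}_G$ of a local group scheme is a finite \emph{local} $k$-algebra whose augmentation ideal is \emph{nilpotent}, whereas the coordinate ring $\mathcal{O}_H$ of an \'etale group scheme is \emph{reduced} (a finite product of finite separable field extensions of $k$). First I would record these two structural facts. For $G$ local, the underlying scheme is connected, so $\mathcal{O}_G$, being Artinian with no nontrivial idempotents, is local; the identity section furnishes an augmentation $\varepsilon\colon \mathcal{O}_G \to k$ whose kernel $I$ is therefore the unique maximal ideal, and $I$ is nilpotent because $\mathcal{O}_G$ is Artinian. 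For $H$ \'etale, $\mathcal{O}_H$ is an \'etale $k$-algebra, hence reduced.

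For $\Hom_C(H,G)=(0)$ I would take a homomorphism $f\colon H \to G$ and look at the induced $k$-algebra map $f^{\ast}\colon \mathcal{O}_G \to \mathcal{O}_H$. Every element $x$ of the augmentation ideal $I$ satisfies $x^n=0$ for some $n$, so $f^{\ast}(x)^n = f^{\ast}(x^n)=0$; since $\mathcal{O}_H$ is reduced this forces $f^{\ast}(x)=0$, i.e.\ $f^{\ast}(I)=0$. Thus $f^{\ast}$ factors through $\mathcal{O}_G/I \cong k$, which means that $f$ is the constant morphism onto the identity section of $G$, that is, the zero homomorphism. Hence $\Hom_C(H,G)=(0)$.

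For $\Hom_C(G,H)=(0)$ I would argue on connected components. A group homomorphism $g\colon G \to H$ carries the identity of $G$ to the identity of $H$, so the image of the connected scheme $G$ is a connected subset of $H$ containing the identity point. Since $H$ is \'etale it is a disjoint union of spectra of finite separable field extensions of $k$, and the component carrying the $k$-rational identity section must be $\Spec k$ itself (a field extension admitting a $k$-point equals $k$). Therefore the image of $G$ lies in this trivial component, $g$ factors through the identity section, and $g$ is again the zero homomorphism. One can run the dual algebraic argument instead: the orthogonal idempotents of $\mathcal{O}_H$ must map to idempotents of the local ring $\mathcal{O}_G$, and the only idempotents there are $0$ and $1$, which pins $g$ down to a single component.

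The arguments are short, and the step I expect to require the most care is not a computation but the handling of a base field $k$ that need not be algebraically closed: I would make explicit the claims that the identity component of an \'etale group scheme is exactly $\Spec k$, and that the residue field at the closed point of a local group scheme is $k$, both of which sit comfortably over a perfect field and which I would isolate as lemmas before invoking them. Once these identifications are in place, the two vanishings follow immediately from the nilpotent-versus-reduced and local-versus-idempotent dichotomies described above.
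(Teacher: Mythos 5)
Your proof is correct, but there is nothing in the paper to compare it against: the paper states this lemma as a bare observation with no proof, deferring to the standard references \cite{Kraft}, \cite{Oort}, \cite{Pink} cited at the start of the section. Your argument is the standard one and both halves are sound. For $\Hom_C(H,G)=(0)$, the augmentation ideal of $\mathcal{O}_G$ is the maximal ideal of an Artinian local ring, hence nilpotent, and reducedness of the \'etale algebra $\mathcal{O}_H$ forces $f^*$ to kill it, so $f$ factors through the identity section. For $\Hom_C(G,H)=(0)$, connectedness of $G$ together with the fact that the component of $H$ carrying the identity section is exactly $\Spec k$ pins the image down. Two refinements are worth making explicit if you write this up. First, since $G$ is finite and connected, $\mathcal{O}_G$ is Artinian local and therefore $|G|$ is a \emph{single point}, so the image of $g\colon G \to H$ is literally the one point $g\circ e_G = e_H$; the ``connected subset'' phrasing can be tightened to this, and the scheme-theoretic factorization through the open-and-closed identity component is then immediate. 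Second, in the idempotent variant you sketch, the idempotent argument alone only shows that $g^*$ factors through \emph{some} single factor of $\mathcal{O}_H$; identifying that factor as the identity component (hence as $k$) requires compatibility with the counit, $\varepsilon_G \circ g^* = \varepsilon_H$, i.e., the fact that a homomorphism of group schemes carries identity to identity — this should be stated rather than left implicit.
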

	
		Each object $G \in C$ can be written in a unique way in the form
		$$
			G=G_{et}\times G_{loc},
		$$
		where $G_{et} \in C_{et}$, $G_{loc} \in C_{loc}$ and
		\[
			{\rm Hom}_{C}(G,H)={\rm Hom}_{C}(G_{et},H_{et}) \times {\rm Hom}_{C}(G_{loc},H_{loc}).
		\]
	
		The same decomposition holds for the linear dual $G^*$. Therefore category $C$ splits into four categories:
		\begin{itemize}
			\item{$C_{loc, loc},$ the category of all  $G \in C_{loc}$ with $G^D \in C_{loc}$,}
			\item{$C_{loc, et},$ the category of all  $G \in C_{loc}$ with $G^D \in C_{et}$,}
			\item{$C_{et, loc},$ the category of all  $G \in C_{et}$ with $G^D \in C_{loc}$,}
			\item{$C_{et, et},$ the category of all  $G \in C_{loc}$ with $G^D \in C_{et}$.}
		\end{itemize}

		Hence the category $C$ has the following decomposition:
		\[
			C = C_{et, et} \times C_{et, loc} \times C_{loc, et} \times C_{loc, loc}.
		\]
		Each category is Abelian and hence $C$ is Abelian itself.
	
		\begin{proposition}
			The following equivalences hold:
			\begin{description}
				\item[a]{$G \in C_{et, et}$ if and only if ${\rm Frob_G}$ and ${\rm Ver}_G$ are isomorphisms, }
				\item[b]{$G \in C_{et, loc}$ if and only if ${\rm Frob_G}$ is isomorphism and ${\rm Ver}_G$ is nilpotent,}
				\item[c]{$G \in C_{loc, et}$ if and only if ${\rm Frob_G}$ is nilpotent  and ${\rm Ver}_G$ is isomorphism,}
				\item[d]{$G \in C_{loc, loc}$ if and only if ${\rm Frob_G}$ and ${\rm Ver}_G$ are both nilpotents.}
			\end{description}
	
	\end{proposition}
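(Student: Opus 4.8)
The plan is to reduce all four equivalences to two standard facts about finite commutative group schemes over a perfect field, after which each case is pure bookkeeping against the definitions of $C_{et,et},\dots,C_{loc,loc}$ recorded above.

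First I would invoke the Frobenius criterion for the \'etale/local dichotomy: a finite commutative group scheme $G$ over $\Spec(k)$ is \'etale if and only if its relative Frobenius ${\rm Frob}_G$ is an isomorphism, and is local if and only if ${\rm Frob}_G$ is nilpotent, in the sense that the iterated Frobenius $G \to G^{(p^{n})}$ factors through the zero section for $n \gg 0$. This is exactly the infinitesimal analysis underlying the splitting $G = G_{et} \times G_{loc}$ already stated: Frobenius is invertible on the \'etale factor and nilpotent on the local one. I would cite this from \cite{Pink} and \cite{Oort}.

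Second I would use that Cartier duality interchanges Frobenius and Verschiebung: under the canonical identification $G^{DD} \cong G$ one has, up to the canonical twist, ${\rm Frob}_{G^D} = ({\rm Ver}_G)^D$ and ${\rm Ver}_{G^D} = ({\rm Frob}_G)^D$. Since dualizing is an anti-equivalence of categories, ${\rm Ver}_G$ is an isomorphism (respectively nilpotent) if and only if ${\rm Frob}_{G^D}$ is an isomorphism (respectively nilpotent), which by the first fact is equivalent to $G^D$ being \'etale (respectively local).

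Combining the two, the proposition follows case by case. By definition the first index of $C_{\bullet,\bullet}$ records whether $G$ is \'etale or local and the second whether $G^D$ is \'etale or local; by the Frobenius criterion the former is controlled by ${\rm Frob}_G$ and, via the duality exchange, the latter by ${\rm Ver}_G$. Thus for part (a), $G \in C_{et,et}$ means $G$ and $G^D$ are both \'etale, i.e. ${\rm Frob}_G$ and ${\rm Ver}_G$ are both isomorphisms; replacing \emph{\'etale/isomorphism} by \emph{local/nilpotent} in the appropriate slot yields (b), (c) and (d) verbatim. I do not expect a genuine obstacle, as all the content sits in the two recalled facts; the only point demanding attention is to fix the Cartier-duality convention so that $C_{loc,et}$ corresponds to ``${\rm Frob}_G$ nilpotent, ${\rm Ver}_G$ isomorphism'' rather than the reverse, matching the definitions given above.
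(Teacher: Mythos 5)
Your proof is correct. Note that the paper itself offers no proof of this proposition: it is recalled as standard background on the category $C$ of finite commutative group schemes over a perfect field, with the reader referred to \cite{Kraft}, \cite{Oort} and \cite{Pink}. Your argument is precisely the standard one behind those references: the Frobenius criterion (\'etale iff ${\rm Frob}_G$ is an isomorphism, local iff ${\rm Frob}_G$ is nilpotent) combined with the Cartier-duality exchange ${\rm Ver}_G = ({\rm Frob}_{G^D})^D$, which transports the criterion for $G^D$ into a condition on ${\rm Ver}_G$; since dualization is an exact anti-equivalence, isomorphisms and nilpotents are preserved, and the four cases follow by matching the two indices of $C_{\bullet,\bullet}$ against the two morphisms. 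Your closing remark about fixing the duality convention is well taken, and is worth keeping, since the paper's own list of the four subcategories contains a typo (it defines $C_{et,et}$ with $G \in C_{loc}$), so the convention must be read off from the intended meaning rather than the literal text.
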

	
		In this paper we focus on the group scheme ${\rm A}[p]$, where ${\rm A}$ is an Abelian variety defined over a finite field $\FF_{q}$, with $p = {\rm char}(\FF_{q})$. The group scheme ${\rm A}[p]$ belongs to category $C$. Furthermore, due to the proposition above and based on the relations
	${\rm Frob}\cdot{\rm Ver}={\rm Ver}\cdot{\rm Frob}=p$ and the fact that $p$ is nilpotent, it follows that
		$$
			A[p] \in C_{et, loc} \times C_{loc, et} \times C_{loc, loc}.
		$$
	
	\end{subsection}

	\begin{subsection}{Circular Words}
	
		We write $C(1)_{k}$ for the category of finite commutative $k-$group schemes which are killed by $p$, where $k$ is a perfect algebraically closed field of characteristic  $p>0$.  The Dieudonn\'e functor shows that the full subcategory $C(1)_{k}$ of $C$ is equivalent to the category of triples $({\rm M},\, {\mathcal F},\, {\mathcal V}),$ where
		\begin{itemize}
			\item{${\rm M}$ is a finite dimensional $k$-vector space,}
			\item{${\mathcal V}: {\rm M} \rightarrow {\rm M}$ is a ${\rm Frob}_{k}$-linear map,}
			\item{${\mathcal F}: {\rm M} \rightarrow {\rm M}$ is a ${\rm Frob}_{k}^{-1}$-linear map,}
		\end{itemize}
		such that  ${\mathcal F}{\mathcal V}={\mathcal V}{\mathcal F}=0$.
				
		\begin{definition}
			A finite locally free commutative group scheme $G$ over a scheme $S$ is called a {\bf truncated Barsotti-Tate group of level $1$} or  a {\bf ${\rm BT}_{1}$ group scheme}, if it is killed by $p$ and ${\rm Ker}({\rm Frob}_{G})={\rm Im}({\rm Ver}_{G}).$
		
		\end{definition}
		
		In terms of exact sequences, a group scheme ${\rm G} \in C(1)_{k}$ is a {\bf ${\rm BT}_{1}$} group scheme if and only if the sequence
		$$
			\xymatrix{{\rm G}   \ar[r]^{\mathcal{F}_{\rm G}} &{\rm G}^{(p)} \ar[r]^{\mathcal{V}_{\rm G}} & {\rm G} }
		$$
		is exact. For the Dieudonn\'e module this means that ${\rm Ker}(\mathcal{V}) ={\rm Im}(\mathcal{F})$ and ${\rm Ker}(\mathcal{F}) ={\rm Im}(\mathcal{V})$.
		
		In the unpublished manuscript \cite{Kraft}, Kraft  showed that there is a normal form of an object of $C(1)_{k}$. The normal form is distinguished into two types of group schemes, linear and circular types, and the group scheme ${\rm A}[p]$ corresponds only to the circular type.

		\begin{definition}
			A {\bf circular word} is a finite ordered set of symbols $\mathcal{F}$ and $\mathcal{V}$:
			$$
				w=L_1 \ldots L_{t}, \qquad L_i \in \{\mathcal{F},\, \mathcal{V}\}.
			$$

		\end{definition}
		
		We say that two words $w_1$ and $w_2$ are equivalent if there is a cyclic permutation which transforms one word into another. In other words, the class of a word is given by $[L_1 \ldots L_{t}]=[L_2 \ldots L_{t}\, L_1]=\ldots=[L_t\, L_1 \ldots L_{t-1}]$.

		A \textbf{dual word} $\bar{w}$ to the circular word $w$ is given by replacing $\mathcal{F}$ to $\mathcal{V}$ and $\mathcal{V}$ to $\mathcal{F}$ in $w$. We will study only self-dual circular words because the scheme ${\rm A}[p]$ is symmetric.

		For a given word $w$ one can construct a finite group scheme $G_{w}$ over $k$ defined by the $k-$vector space
		\[
			\mathbb{D}(G_{w})=\sum_{i=1}^{t} k\,z_{i},
		\]
		with the structure of the Diuedonn\'e module given by
		\begin{center}
			\begin{tabular}{l l}
				if $L_i=\mathcal{F}$, & then $\mathcal{F} z_{i}=z_{i+1}$ and $ \mathcal{V}\, z_{i+1}=0$,\\
				if $L_i=\mathcal{V}$, & then $\mathcal{V} z_{i+1}=z_{i}$ and $ \mathcal{F}\, z_{i}=0$.\\
			\end{tabular}
		\end{center}

		A word $w$ is called \textbf{decomposable} if there exist $t', \mu \in \ZZ_{>0}$, with $\mu \cdot t' = t$ and $L_1, ..., L_{t'} \in \{ \mathcal{F}, \mathcal{V} \}$, such that
		\begin{equation*}
		  [L_1...L_t] = [(L_1...L_{t'})^{\mu}] =
		  [(L_1...L_{h'})...(L_1...L_{h'})].
		\end{equation*}
		If such a way of writing with $\mu > 1$ is not possible, we say the word $w$ is \textbf{indecomposable}.

		Category $C(1)_{k}$ is Abelian, and all objects have a finite length. Hence  every object from  $C(1)_{k}$ is a direct sum of indecomposable objects. Up to isomorphism and permutation of the factors, this decomposition is unique. The following theorem can be found in \cite{Kraft}.

		\begin{theorem}\label{FiniteGroupSchemesAndCircularWords}
			\begin{enumerate}
				\item A circular word $w$ defines a ${\rm BT}_1$-group scheme $G_{w}$, and $w$ is indecomposible if and only if $G_{w}$ is indecomposible.
				\item Any ${\rm BT}_1$-group scheme over $k$ is a direct sum of indecomposable ${\rm BT}_1$-group schemes.
				\item For any indecomposible ${\rm BT}_1$-group scheme $G$ over $k$, there exists an indecomposible word $w$ such that $G \cong G_{w}$.
			\end{enumerate}
		
		\end{theorem}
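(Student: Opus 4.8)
The plan is to work entirely on the Dieudonn\'e side, where by the equivalence recalled above a ${\rm BT}_1$-group scheme over $k$ is the same datum as a triple $(M,\mathcal{F},\mathcal{V})$ with $\mathcal{F}\mathcal{V}=\mathcal{V}\mathcal{F}=0$ and, crucially, $\Ker(\mathcal{V})=\im(\mathcal{F})$ and $\Ker(\mathcal{F})=\im(\mathcal{V})$; I will call such a triple a ${\rm BT}_1$-module. That each $G_{w}$ is one is a direct inspection of the arrow rules: the construction puts $z_{j}\in\Ker(\mathcal{V})$ exactly when $z_{j}\in\im(\mathcal{F})$, and symmetrically with $\mathcal{F}$ and $\mathcal{V}$ interchanged, so the two defining equalities hold on the nose. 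The existence statement (2) is then the easiest point: since $C(1)_{k}$ is abelian and every object has finite length, the Krull--Schmidt theorem already yields a decomposition into indecomposables, unique up to isomorphism and reordering, and it remains only to check that a direct summand of a ${\rm BT}_1$-module is again one. This is immediate, because for $M=M_1\oplus M_2$ both $\Ker$ and $\im$ of $\mathcal{F}$ and $\mathcal{V}$ distribute over the summands, so the ${\rm BT}_1$-relations hold on $M$ if and only if they hold on each $M_i$.

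For (1) I would first treat the direction ``$w$ decomposable $\Rightarrow G_{w}$ decomposable''. If $[w]=[(w')^{\mu}]$ with $\mu>1$, the arrow pattern defining $\mathbb{D}(G_{w})$ is periodic of period $t'=t/\mu$, so $\mathbb{D}(G_{w})$ consists of $\mu$ identical blocks glued into a single length-$t$ cycle on which $\mathcal{F}$ and $\mathcal{V}$ act by a twisted cyclic shift. Over the algebraically closed field $k$ this semilinear shift can be split into $\mu$ invariant sub-triples, each isomorphic to $\mathbb{D}(G_{w'})$: the scalar equations produced by the rescaling are of the form $x^{p^{s}}=c$, which are solvable because Frobenius is surjective on $k$. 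Hence $G_{w}\cong (G_{w'})^{\oplus\mu}$ is decomposable.

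The heart of the argument is the construction of a normal-form basis, which delivers (3) together with the remaining half of (1). Starting from an arbitrary ${\rm BT}_1$-module $M$, the idea is to produce a basis $z_1,\dots,z_t$ adapted to $\mathcal{F}$ and $\mathcal{V}$. Since $\mathcal{F}\mathcal{V}=\mathcal{V}\mathcal{F}=0$, from any vector one may follow an unbroken string of $\mathcal{F}$'s or of $\mathcal{V}$'s but never mix the two consecutively, and the relations $\Ker(\mathcal{F})=\im(\mathcal{V})$, $\Ker(\mathcal{V})=\im(\mathcal{F})$ force these strings to close up into cycles alternating between $\mathcal{F}$-segments and $\mathcal{V}$-segments. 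Choosing basis vectors along such a cycle and using perfectness of $k$ to rescale each step (again solving $x^{p}=c$) so that the nonzero images of $\mathcal{F}$ and $\mathcal{V}$ on basis vectors are exactly the next basis vectors, one recovers precisely the defining relations of some $G_{w}$; a single such cycle yields an indecomposable word $w$, giving $M\cong G_{w}$, which is (3).

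Finally, to close (1) I would show that $w$ indecomposable implies $\End(G_{w})$ is local, so that $G_{w}$ has no nontrivial idempotent and is indecomposable. Computing $\End(G_{w})$ amounts to determining the $k$-linear maps commuting with the twisted cyclic shift; for a primitive (aperiodic) cycle these reduce, after the semilinear normalization, to a scalar plus a nilpotent part, giving a local ring. I expect this endomorphism computation to be the main obstacle, since it is exactly the point where the aperiodicity of $w$ and the algebraic closedness of $k$ must be combined to forbid a splitting, and where the combinatorial notion ``primitive word'' has to be translated into ``no nontrivial idempotent''; every earlier step is either formal (Krull--Schmidt) or a direct rescaling made possible by the surjectivity of Frobenius.
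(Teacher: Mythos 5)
Before anything else, note that the paper itself does not prove this theorem: the sentence immediately preceding it reads ``The following theorem can be found in \cite{Kraft}'', and the result is imported wholesale from Kraft's unpublished manuscript. So there is no internal proof to compare against; your attempt is, in effect, competing with Kraft's classification itself. Your architecture is the right one (and is essentially Kraft's): pass to Dieudonn\'e triples, check the ${\rm BT}_1$ relations for $G_w$ by inspection, get statement (2) from Krull--Schmidt (which the paper records just before the theorem) plus the correct observation that an $\mathcal{F},\mathcal{V}$-stable direct summand of a ${\rm BT}_1$-module is again one, and split $G_{(w')^{\mu}}$ into $\mu$ copies of $G_{w'}$ by solving scalar equations over $k$. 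Those parts are sound, with one quibble: the equations arising in the splitting are of the form $x^{p^{s}\pm 1}=c$ (e.g.\ $a^{p^{2}+1}=1$ for $[\mathcal{FVFV}]$), not $x^{p^{s}}=c$, so you genuinely need $k$ algebraically closed and not merely perfect.

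The genuine gap is in your treatment of (3), which carries nearly all the weight of the theorem. The assertion that $\Ker\mathcal{F}=\im\mathcal{V}$ and $\Ker\mathcal{V}=\im\mathcal{F}$ ``force these strings to close up into cycles'' describes the shape of the answer, not a proof. Since $\mathcal{F}$ and $\mathcal{V}$ are neither injective nor surjective, following a vector forward under $\mathcal{F}$ and backward under $\mathcal{V}$ involves non-canonical choices of preimages, and a generic starting vector lies on no cycle: in $\DD(G_{[\mathcal{FFVV}]})$ the vector $z_1+z_2$ goes under $\mathcal{F}$ to $z_2+z_3$, then to $z_3$, then via preimage choices to $z_4$, $z_1$, $z_2,\dots$ --- an eventually periodic string visiting six distinct vectors of a four-dimensional space, which is neither a basis nor a single cycle. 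What must actually be proved is that every (indecomposable) ${\rm BT}_1$-module admits a basis on which $\mathcal{F}$ and $\mathcal{V}$ act monomially, permuting the basis lines up to scalars; this monomial-basis existence is precisely the content of Kraft's normal-form theorem and requires a real induction on a canonical filtration built from iterated images of $\mathcal{F}$ and preimages under $\mathcal{V}$ --- the rescaling by perfectness that you describe is only the final cosmetic step. The second gap you flag yourself: the direction ``$w$ indecomposable $\Rightarrow G_w$ indecomposable'' is reduced to locality of $\End(G_w)$ for a primitive word, which is indeed the right mechanism (it is exactly where primitivity enters), but the computation is announced rather than carried out; and it cannot be bypassed by combining (2) and (3), since knowing $G_w\cong\bigoplus_i G_{w_i}$ with $w_i$ indecomposable words does not identify $[w]$ with a concatenation of the $[w_i]$ unless one already knows $w\mapsto G_w$ is injective on indecomposables --- which is the same question. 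In short: correct plan, easy parts done, but the two load-bearing steps are missing.
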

		
		\begin{corollary}\label{IndecomposableQuasiPolarizedBT1GroupSchemes}
		  A quasi-polarized ${\rm BT}_1$-group scheme $G$ over $k$ is indecomposable (i.e. $G$ is not a direct sum of two quasi-polarized ${\rm BT}_1$-group schemes) if
		  \begin{itemize}
		    \item either $G = G_w$, where $w$ is an indecomposable word, for which $[w] = [\bar{w}]$,
		    
		    \item or $G = G_u \oplus G_v$, where $u$ and $v$ are distinct indecomposable words, such that $[\bar{u}] = [v]$.
		  \end{itemize}
		  
		\end{corollary}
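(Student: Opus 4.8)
The plan is to pass to the Dieudonné side, where a quasi-polarization of a ${\rm BT}_1$-group scheme $G$ becomes a perfect pairing on $\mathbb{D}(G)$ that intertwines $\mathcal{F}$ and $\mathcal{V}$ in the adjoint sense dictated by Cartier duality; equivalently it is an isomorphism $\lambda\colon G \xrightarrow{\sim} G^D$ with $\lambda = \pm\,\lambda^D$. The one computation I would record at the outset is the duality $(G_w)^D \cong G_{\bar w}$ on the level of generators $z_1,\dots,z_t$: dualizing $\mathbb{D}(G_w)$ swaps the roles of $\mathcal{F}$ and $\mathcal{V}$, which is exactly the passage $w \mapsto \bar w$. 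With this in hand the whole statement is governed by the combinatorics of words together with the Krull--Schmidt property supplied by Theorem \ref{FiniteGroupSchemesAndCircularWords}.

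First I would establish the necessary condition. Writing $G = \bigoplus_i G_{w_i}$ with each $w_i$ indecomposable (Theorem \ref{FiniteGroupSchemesAndCircularWords}(2)), the existence of an isomorphism $G \cong G^D = \bigoplus_i G_{\bar{w_i}}$ together with the uniqueness of the decomposition forces the multisets $\{[w_i]\}$ and $\{[\bar{w_i}]\}$ to coincide. Hence the indecomposable constituents organize themselves into orbits under $w \mapsto \bar w$, each orbit being either a self-dual singleton with $[w]=[\bar w]$ or a genuine dual pair $\{[u],[\bar u]\}$ with $[u]\neq[\bar u]$.

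Next I would treat the two candidate building blocks and show each is quasi-polarizable and indecomposable as such. For a self-dual $w$ the scheme $G_w$ is already indecomposable as a bare group scheme (Theorem \ref{FiniteGroupSchemesAndCircularWords}(1)), so it certainly cannot split as a direct sum of two quasi-polarized pieces; the only genuine content is to produce an honest quasi-polarization from the abstract isomorphism $G_w \cong (G_w)^D$, which amounts to symmetrizing $\phi \mapsto \phi \pm \phi^D$ and using that $\End(G_w)$ is local to guarantee the symmetrized map is still an isomorphism. For a dual pair I would equip $G_u \oplus G_{\bar u}$ with the hyperbolic pairing coming from the tautological identification $G_{\bar u} \cong (G_u)^D$; this is manifestly a quasi-polarization. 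To see that it is indecomposable as a quasi-polarized object, suppose it were an orthogonal sum of two nonzero quasi-polarized ${\rm BT}_1$-schemes. Each summand would be self-dual for $\lambda$, hence isomorphic to its own dual; but by Krull--Schmidt its indecomposable constituents lie in $\{G_u, G_{\bar u}\}$, and since $[u]\neq[\bar u]$ neither $G_u$ nor $G_{\bar u}$ is isomorphic to its own dual. Thus no proper nonzero self-dual summand exists.

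Finally, reading the statement as a characterization, I would show that an arbitrary quasi-polarized $(G,\lambda)$ decomposes orthogonally along the orbit decomposition found above, exhibiting $G$ as an orthogonal direct sum of self-dual singletons and hyperbolic pairs; indecomposability then forces a single orbit, i.e. exactly one of the two listed shapes. The hard part will be precisely this orthogonal splitting: a priori $\lambda$ may have nonzero off-diagonal components between constituents in different orbits, and these must be cleared. I would handle this by the standard argument for $(\pm)$-symmetric forms over a Krull--Schmidt category: a nonzero compatible pairing between blocks $G_{w_i}$ and $G_{w_j}$ supplies a nonzero map $G_{w_i} \to (G_{w_j})^D = G_{\bar{w_j}}$, and nondegeneracy of $\lambda$ together with the matching of multisets lets one peel off, block by block, either a self-dual summand or a hyperbolic plane, adjusting the chosen isomorphism $G \cong \bigoplus_i G_{w_i}$ by an automorphism at each stage---locality of the endomorphism rings of the indecomposables being what makes the inductive step go through. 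Completing this induction, and checking the mild sign and characteristic-$2$ subtleties in the symmetrization step, yields the stated dichotomy.
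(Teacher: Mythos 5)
Your proposal is correct on the point the statement literally asserts, and it goes well beyond what the paper records. The paper gives no proof at all: the corollary is presented as an immediate consequence of Theorem \ref{FiniteGroupSchemesAndCircularWords} (Krull--Schmidt uniqueness of the decomposition in $C(1)_k$) together with the one computation you put at the outset, namely $(G_w)^D \cong G_{\bar{w}}$; those two facts already yield the stated ``if'' direction by exactly your second and third paragraphs (an indecomposable group scheme $G_w$ cannot split at all, and a nonzero orthogonal summand of $G_u \oplus G_v$ would have to be self-dual, while neither $G_u$ nor $G_v$ is when $[u] \neq [v]=[\bar{u}]$). Where you genuinely diverge is in also proving the unstated converse --- that every indecomposable quasi-polarized ${\rm BT}_1$-group scheme has one of the two listed shapes --- by the orthogonal-splitting induction for $(\pm)$-symmetric forms over a Krull--Schmidt category; the paper uses this converse implicitly when it compiles the tables (e.g.\ the $g=2$ discussion), but never writes it down, so your version is strictly more complete. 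One soft spot in that extra part deserves attention: the symmetrization $\phi \mapsto \phi \pm \phi^D$ combined with locality of $\End(G_w)$ only shows that \emph{one} of the two symmetrizations is an isomorphism (and shows nothing at $p=2$), whereas what the converse actually requires is an \emph{alternating} isomorphism $G_w \cong (G_w)^D$ for every self-dual indecomposable word. This is needed both to realize the first bullet and to exclude a third kind of orthogonally indecomposable object, namely a hyperbolic form on $G_w \oplus G_w$ with $[w]=[\bar{w}]$ --- the analogue of the symplectic plane over a field, which \emph{is} orthogonally indecomposable precisely because a one-dimensional space carries no nonzero alternating form, and which would not fit either bullet since there $u$ and $v$ coincide. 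For the words in question the alternating pairing exists and can be written down explicitly on the basis $z_1, \dots, z_t$, but it does not follow from the locality trick; if you carry out the converse in full, replace the symmetrization step by that explicit pairing.
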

	
	\end{subsection}

	\begin{subsection}{Description of  $p$-Rank and $a$-Number in Terms of Circular Words}
		
		Here we give the definitions of $p$-rank and $a$-number of an Abelian variety over ${\bar \FF}_p$ and describe them in terms of circular words.		
		
		\begin{definition}
			Let ${\rm A}$ be an Abelian variety of dimension $g$ over a perfect field $k,$ with ${\rm char}(k)=p>0.$ 
			\begin{itemize}
				\item A number $f({\rm A})$  is called {\bf $p$-rank} of Abelian variety ${\rm A}$ if 
		  	\begin{equation*}
		    	{\rm A}[p](\bar{\FF}_p) \cong
		    	(\ZZ / p \ZZ)^{f({\rm A})}.
		  	\end{equation*}
				\item A number
		  	$$
		    	a({\rm A}) =
		    	\dim_{\bar{\FF}_p} {\rm Hom}(\alpha_p, {\rm A}[p])
		  	$$
		  	is called {\bf $a$-number} of Abelian variety ${\rm A}$.
			\end{itemize}
		
		\end{definition}
		
		As the group scheme $\mu_p$ is dual to $\ZZ / p \ZZ$, and  ${\rm A}[p]$ is a self-dual group scheme, it is clear that
		\begin{equation*}
			f({\rm A}) =
			\dim_{{\bar \FF}_p} {\rm Hom}(\mu_p, {\rm A}[p]) \qquad \mbox{and} \qquad 0 \leq f({\rm A}) \leq {\rm dim}(A).
		\end{equation*}

		Since ${\rm Ker}({\rm Frob}_{k}) \cap {\rm Ker}({\rm Ver}_{k})$ is the product of a certain number of copies of $\alpha_p$, we also obtain that
		$$
			a({\rm A}) =
			\log_p {\rm ord} S =
		  \dim_{\bar{\FF}_p} \ker \left(
		    {\rm Ver}_{k} :
		    H^0({\rm A}, \Omega^1_{\rm A})
		    \rightarrow
		    H^0({\rm A}, \Omega^1_{\rm A})
		  \right),
		$$
		where $S$ is a maximal subgroup scheme in ${\rm A}[p]$ that is killed by the action of ${\rm Frob}_{k}$ and ${\rm Ver}_{k}$.

		\begin{remark}
			From the fact that the group schemes $\mu_p$ and $\alpha_p$ are simple, it follows that the invariants $p$-rank and $a$-number are additive, i.\ e.\ if $G_1$ and $G_2$ are from $C(1)_{k}$, then
		  \begin{gather*}
		    f(G_1 \oplus G_2) =
		    f(G_1) + f(G_2), \\
		    a(G_1 \oplus G_2) =
		    a(G_1) + a(G_2).
		  \end{gather*}

		\end{remark}
		
Here we provide an important result needed for computing the invariants.
		\begin{proposition}\label{aNumberCalc}
			Let $G$ be a finite group scheme over $\bar{\FF}_p$ of local-local type. Then the group scheme $\ker ({\rm Frob}_{G}) \cap \ker ({\rm Ver}_{G})$ is isomorphic to the $a(G)$ copies of $\alpha_p$, that is
			\begin{equation*}
				\alpha^{a(G)}_p \cong
				\ker ({\rm Frob}_{G}) \cap \ker ({\rm Ver}_{G}),
			\end{equation*}
			where ${\rm Frob}_{G}$ and ${\rm Ver}_{G}$ are the morphisms of Frobenius and Verschiebung acting on $G$, respectively.
	
		\end{proposition}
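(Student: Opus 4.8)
The plan is to pass to Dieudonné modules and reduce the statement to an elementary observation about the operators $\mathcal{F}$ and $\mathcal{V}$. Write $M = \mathbb{D}(G)$ for the Dieudonné module of $G$; since $G$ is of local-local type, $\mathcal{F}$ and $\mathcal{V}$ are both nilpotent on $M$. Set $H := \ker(\mathrm{Frob}_G) \cap \ker(\mathrm{Ver}_G)$, interpreted as the maximal closed subgroup scheme of $G$ annihilated by both Frobenius and Verschiebung, i.e.\ the group scheme $S$ introduced just before the statement. The first task is to show $H \cong \alpha_p^{\,n}$ for some $n \geq 0$, and the second is to identify $n$ with $a(G)$.

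For the structural step, note that $H \subseteq \ker(\mathrm{Frob}_G)$ forces $\mathrm{Frob}_H = 0$ (functoriality of Frobenius, together with the fact that the inclusion stays a monomorphism after twisting), and likewise $\mathrm{Ver}_H = 0$. Because $\mathrm{Frob}\cdot\mathrm{Ver} = p$, the subgroup $H$ is killed by $p$, so $H \in C(1)_k$ and its Dieudonné module $\mathbb{D}(H)$ carries $\mathcal{F} = \mathcal{V} = 0$. Such a module is merely a finite-dimensional $k$-vector space, so, choosing a basis, it splits as a direct sum of $n := \dim_k \mathbb{D}(H)$ one-dimensional modules, each isomorphic to the one-dimensional module $\mathbb{D}(\alpha_p)$ with $\mathcal{F} = \mathcal{V} = 0$. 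Transporting this decomposition back through the equivalence of categories yields $H \cong \alpha_p^{\,n}$.

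It remains to identify $n$ with the $a$-number. Every homomorphism $\alpha_p \to G$ has image annihilated by $\mathcal{F}$ and $\mathcal{V}$, since $\alpha_p$ is, and hence factors through $H$; therefore $\Hom(\alpha_p, G) = \Hom(\alpha_p, H) \cong \Hom(\alpha_p, \alpha_p^{\,n})$. As $\End(\alpha_p) \cong k$, the right-hand side has $k$-dimension $n$, so by definition $a(G) = n$ and $H \cong \alpha_p^{\,a(G)}$. Alternatively, since $\mathrm{ord}(\alpha_p^{\,n}) = p^{\,n}$, one may simply read off $n = \log_p \mathrm{ord}(S)$ and invoke the identity $a(G) = \log_p \mathrm{ord}(S)$ recorded above, which gives $n = a(G)$ at once.

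The categorical bookkeeping through the Dieudonné equivalence is routine; the one place that needs genuine care — the main obstacle — is making the intersection $\ker(\mathrm{Frob}_G) \cap \ker(\mathrm{Ver}_G)$ precise, since $\mathrm{Frob}_G$ and $\mathrm{Ver}_G$ a priori relate $G$ to its Frobenius twist $G^{(p)}$ rather than to $G$ itself. One must argue that ``killed by both operators'' singles out exactly the subscheme whose Dieudonné module is the maximal quotient $M/(\mathcal{F}M + \mathcal{V}M)$ on which $\mathcal{F}$ and $\mathcal{V}$ vanish, and check that the local-local hypothesis guarantees this subscheme captures precisely the local-local behaviour of $G$ (the \'etale-local and local-\'etale summands contribute nothing, since there one of $\mathcal{F}$, $\mathcal{V}$ is an isomorphism).
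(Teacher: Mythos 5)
The paper never actually proves this proposition: it is stated as a known fact of Dieudonn\'e theory (in the spirit of \cite{Kraft}, \cite{Oort}, \cite{Pink}) and is then used, not proved, in the argument for Proposition~\ref{ComputationOfa-number}, where $a(G)$ is computed as $\dim_{\bar\FF_p}(\ker\mathcal{F}\cap\ker\mathcal{V})$ inside $\mathbb{D}(G)$. So there is no proof of the paper's to compare yours against; what can be judged is correctness, and your argument is correct and is the standard one. Your step one (the maximal closed subgroup scheme $H$ killed by Frobenius and Verschiebung satisfies $F_H=V_H=0$, hence $\mathcal{F}=\mathcal{V}=0$ on $\mathbb{D}(H)$, and a Dieudonn\'e module with vanishing operators is a $k$-vector space, i.e.\ a direct sum of copies of $\mathbb{D}(\alpha_p)$, so $H\cong\alpha_p^n$) and step two (any $\phi\colon\alpha_p\to G$ has $F$ and $V$ vanishing on its image by functoriality applied to the epimorphism $\alpha_p\to\operatorname{Im}\phi$, hence factors through $H$, so $\Hom(\alpha_p,G)\cong\Hom(\alpha_p,\alpha_p^n)\cong\bar\FF_p^{\,n}$ and $n=a(G)$) are both sound; note step two is purely categorical and needs no Dieudonn\'e theory at all.

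Three caveats. First, your proposed shortcut at the end of step two is circular: the identity $a(G)=\log_p\operatorname{ord}(S)$ ``recorded above'' in the paper is itself deduced there from the assertion that $S$ is a product of copies of $\alpha_p$ --- that is, from the very proposition being proved --- so you must keep the Hom-counting argument as the actual proof. Second, your identification $\mathbb{D}(H)=M/(\mathcal{F}M+\mathcal{V}M)$ is the contravariant normalization (subgroups correspond to quotient modules), whereas the paper's own computation in the proof of Proposition~\ref{ComputationOfa-number} treats subobjects as submodules, under which $\mathbb{D}(H)=\ker\mathcal{F}\cap\ker\mathcal{V}$. These are not interchangeable for general local-local $G$: for $M=\langle z_1,z_2,z_3\rangle$ with $\mathcal{F}z_1=z_2$, $\mathcal{V}z_3=z_2$ and all other values zero, one has $\dim(\ker\mathcal{F}\cap\ker\mathcal{V})=1$ but $\dim M/(\mathcal{F}M+\mathcal{V}M)=2$; they agree for self-dual modules such as $\mathbb{D}({\rm A}[p])$. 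So fix one variance and use it consistently; with either choice your steps one and two go through verbatim, since neither uses the explicit description of $\mathbb{D}(H)$. Third, your closing remark that the local-local hypothesis is needed to discard \'etale-local and local-\'etale summands is inverted --- the hypothesis says such summands are absent --- and in fact the proposition holds for every finite commutative $G$, since those summands contribute nothing to either side of the asserted isomorphism.
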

		
		In order to describe the $a$-number in terms of circular words, for each circular word $w$ we introduce a new auxiliary invariant, the number $c(w)$ corresponding to the word $w$.

		\begin{definition}
		  Let $w$ be a circular word, and the word $w^*$ is received from $w$ by cyclic permutation of symbols such that word $w^*$ is started from the symbol $\mathcal{F}$ and ended by the symbol $\mathcal{V}$. Then we denote by $c(w)$ a number of subwords of the form $\mathcal{FV}$ in the word $w^*$. Note that the chosen representative $w^*$ of the class $[w]$ always exists, but it may not be unique with the desired property. However, in any case the number $c(w)$ will not depend on the choice of representative $w^*$.
		  
		\end{definition}
		
Now we can describe an $a$-number in terms of $c(w)$ numbers.

		\begin{proposition}\label{ComputationOfa-number}
			Let ${\rm A}$ be an Abelian variety over $\bar{\FF}_p.$ Assume that  the group scheme  ${\rm A}[p]$ corresponds to a set of indecomposible words $\{ w_i \}$ under the equivalence of the categories. Then $a$-number of ${\rm A}$ is a sum of $c(w_i),$ i.\ e.\
		  \begin{equation*}
		    a({\rm A}) = \sum_{i = 1}^{n} c(w_i).
		  \end{equation*}
		\end{proposition}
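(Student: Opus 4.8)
The plan is to reduce the statement to a single indecomposable word and then to read off the invariant directly from the explicit Dieudonn\'e module. First I would invoke the additivity of the $a$-number recorded in the Remark above: the equivalence of categories gives a decomposition ${\rm A}[p] \cong \bigoplus_i G_{w_i}$, whence $a({\rm A}) = \sum_i a(G_{w_i})$, so it suffices to prove $a(G_w) = c(w)$ for each indecomposable circular word $w$. If $w$ contains both symbols, then both $\mathcal{F}$ and $\mathcal{V}$ are nilpotent on $\mathbb{D}(G_w)$, so $G_w$ is of local-local type and Proposition \ref{aNumberCalc} applies. Passing through the Dieudonn\'e correspondence, the subgroup $\ker({\rm Frob}_{G_w}) \cap \ker({\rm Ver}_{G_w}) \cong \alpha_p^{a(G_w)}$ translates into the subspace of $\mathbb{D}(G_w)$ annihilated by both operators, so that $a(G_w) = \dim_k \bigl(\ker\mathcal{F} \cap \ker\mathcal{V}\bigr)$.

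Next I would carry out this kernel computation on the basis $z_1,\ldots,z_t$ of $\mathbb{D}(G_w)$ attached to the word $w = L_1 \ldots L_t$. The defining relations send each basis vector either to $0$ or to another basis vector, and injectively on the vectors not killed; hence $\ker\mathcal{F}$ and $\ker\mathcal{V}$ are each spanned by subsets of $\{z_i\}$, and their intersection is spanned by the common basis vectors. Reading off the rules, $\mathcal{F}z_i = 0$ exactly when $L_i = \mathcal{V}$, while $\mathcal{V}z_i = 0$ exactly when $L_{i-1} = \mathcal{F}$ (indices taken cyclically). Consequently $z_i \in \ker\mathcal{F}\cap\ker\mathcal{V}$ if and only if $L_{i-1} = \mathcal{F}$ and $L_i = \mathcal{V}$, so $a(G_w)$ equals the number of cyclic occurrences of the pattern $\mathcal{FV}$ in $w$.

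It then remains to identify this cyclic count with $c(w)$, which by definition counts the $\mathcal{FV}$-subwords of a representative $w^*$ that begins with $\mathcal{F}$ and ends with $\mathcal{V}$. The point is that passing from the cyclic word to the linear string $w^*$ affects only the wrap-around pair formed by the last and first symbols; since $w^*$ ends in $\mathcal{V}$ and begins with $\mathcal{F}$, that pair reads $\mathcal{VF}$ rather than $\mathcal{FV}$, so no occurrence of $\mathcal{FV}$ is created or destroyed. Thus the linear count in $w^*$ agrees with the intrinsic cyclic count, giving $a(G_w) = c(w)$. The degenerate words $w = \mathcal{F}^t$ and $w = \mathcal{V}^t$, which contribute to the $p$-rank and not to the $a$-number, must be treated separately: there $\ker\mathcal{F}\cap\ker\mathcal{V} = 0$ and no admissible representative $w^*$ exists, so both sides vanish.

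The main obstacle I anticipate is not the linear algebra but the bookkeeping of the last step: making precise that $c(w)$ is independent of the chosen representative $w^*$ and that it matches the cyclic count, which is exactly where the stipulation that $w^*$ starts with $\mathcal{F}$ and ends with $\mathcal{V}$ does the work. A secondary point requiring care is checking that the identification $a(G_w) = \dim\bigl(\ker\mathcal{F}\cap\ker\mathcal{V}\bigr)$ is insensitive to the covariant-versus-contravariant normalization of the Dieudonn\'e functor; for these monomial operators the dimension of $\ker\mathcal{F}\cap\ker\mathcal{V}$ coincides with that of the cokernel $\mathbb{D}(G_w)/(\mathcal{F}\,\mathbb{D}(G_w)+\mathcal{V}\,\mathbb{D}(G_w))$, so either convention yields the same value $c(w)$.
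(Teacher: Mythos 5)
Your proof is correct and follows essentially the same route as the paper's: additivity of the $a$-number reduces the claim to a single indecomposable word $w$, after which $a(G_w)=\dim_{\bar{\FF}_p}\left(\ker\mathcal{F}\cap\ker\mathcal{V}\right)$ is computed on the standard basis $z_1,\ldots,z_t$ of $\mathbb{D}(G_w)$ and identified with the number of $\mathcal{FV}$-patterns, i.e.\ with $c(w)$. If anything, your write-up is more careful than the paper's own proof, which leaves implicit the local-local hypothesis behind Proposition \ref{aNumberCalc}, the degenerate words $[\mathcal{F}]$ and $[\mathcal{V}]$, and the check that the cyclic count agrees with the linear count in $w^*$.
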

		\begin{proof}
		  If  ${\rm A}[p]$ is a direct sum  $G_1 \oplus ... \oplus G_m$ of indecomposible group schemes, then $a(A) = \sum a(G_i)$. Hence it is sufficient to show that $a(G) = c(w)$ for an indecomposable group scheme $G = G_w$ (corresponding to an indecomposible word $w$).

			A given indecomposible word $w$ describes the operators $\mathcal{F}$ and $\mathcal{V}$ on $\DD(G) = \sum \bar{\FF}_p z_i$ under the equivalence of the categories of $C_{loc,\, loc}$ and the Diedounne modules. Therefore
		  \begin{gather*}
		    a(G) =
		    \dim_{\bar{\FF}_p}(\ker \mathcal{F} \cap \ker \mathcal{V}) = 
		    \dim_{\bar{\FF}_p} \{ x \in \DD(G) | \mathcal{F}(x) = \mathcal{V}(x) = 0 \} = \\
		  =\dim_{\bar{\FF}_p} \{ x = \sum^t_{i = 1} x_i z_i | \sum x^p_i \mathcal{F}(z_i) = \sum x^{p^{-1}} \mathcal{V}(z_i) = 0 \} =\\ 
		   = \# \{ i | \mathcal{F}(z_i) \neq 0 \text{ and } \mathcal{V}(z_i) \neq 0 \} =\\
		  =  \text{``number of subwords of the form $\mathcal{FV}$ in the word $w^*$''} = c(w).
		  \end{gather*}

		\end{proof}
		
	\end{subsection}

	\begin{subsection}{Quasi-Polarized Indecomposable ${\rm BT}_1$-Group Schemes of Order $p^{2g}$ with $g$ up to $5$}
		
		Due to the corollary \ref{IndecomposableQuasiPolarizedBT1GroupSchemes}, we can describe each isomorphism class of a quasi-polarized indecomposable ${\rm BT}_1$-group schemes over $\bar{\FF}_p$ in terms of self-dual sets of indecomposable circular words. Here we provide a complete list of isomorphism classes of quasi-polarized indecomposable ${\rm BT}_1$-group schemes over $\bar{\FF}_p$ of order $p^{2g}$ with $g$ up to $5$.
		
		\begin{center}
		\begin{tabular}{|c|c|c|c|}
		\hline
		$g$ & {\rm circular words} & {\rm group scheme} & $a$-number \\
		\hline
		
		\multirow{2}{*}{$1$} & $[ \mathcal{F} ], [ \mathcal{V} ]$ & $\mu_p \times \ZZ / p\ZZ$ & $0$ \\ \cline{2-4}
		& $[ \mathcal{FV} ]$ & ${\rm I}_{1, 1}$ & $1$ \\
		\hline
		
		$2$ & $[ \mathcal{FFVV} ]$ & ${\rm I}_{2, 1}$ & $1$ \\
		\hline
		
		\multirow{2}{*}{$3$} & $[ \mathcal{FFFVVV} ]$ & ${\rm I}_{3, 1}$ & $1$ \\ \cline{2-4}
		& $[ \mathcal{FFV} ], [ \mathcal{VVF} ]$ & ${\rm I}_{3, 2}$ & $2$ \\
		\hline
		
		\multirow{3}{*}{$4$} & $[ \mathcal{FFFFVVVV} ]$ & ${\rm I}_{4, 1}$ & $1$ \\ \cline{2-4}
		& $[ \mathcal{FFFV} ], [ \mathcal{VVVF} ]$ & ${\rm I}_{4, 2}$ & $2$ \\ \cline{2-4}
		& $[ \mathcal{FFVFVVFV} ]$ & ${\rm I}_{4, 3}$ & $3$ \\
		\hline
		
		\multirow{6}{*}{$5$} & $[ \mathcal{FFFFFVVVVV} ]$ & ${\rm I}_{5, 1}$ & $1$ \\ \cline{2-4}
		& $[ \mathcal{FFFFV} ], [ \mathcal{VVVVF} ]$ & ${\rm I}_{5, 2}$ & $2$ \\ \cline{2-4}
		& $[ \mathcal{FFFVV} ], [ \mathcal{VVVFF} ]$ & ${\rm J}_{5, 2}$ & $2$ \\ \cline{2-4}
		& $[ \mathcal{FFFVFVVVFV} ]$ & ${\rm I}_{5, 3}$ & $3$ \\ \cline{2-4}
		& $[ \mathcal{FFVFFVVFVV} ]$ & ${\rm J}_{5, 3}$ & $3$ \\ \cline{2-4}
		& $[ \mathcal{FFVFV} ], [ \mathcal{VVFVF} ]$ & ${\rm I}_{5, 4}$ & $4$ \\
		\hline
		\end{tabular}
		\end{center}
		
		In this table by the ${\rm I}_{g, a}$ we denote a quasi-polarized indecomposable ${\rm BT}_1$-group scheme of order $p^{2g}$ with $a$-number $a$. Moreover, for $g=5$ there are two pairs of different isomorphism classes of the ${\rm BT}_1$-group scheme of the same order and $a$-number.

		Let us consider the case with $g = 2$ as an example. The corollary \ref{IndecomposableQuasiPolarizedBT1GroupSchemes} shows that the indecomposable ${\rm BT}_1$-group scheme $G$ is either $G_w$ for the indecomposable word $w$ of length $4$ with $w = \bar{w}$, or $G = G_u \oplus G_v$, where $u$ and $v$ are distinct indecomposable words of length $2$ such that $\bar{u} = v$. However, there are only  two indecomposable words of length $2$, up to the equivalence, namely the words $\mathcal{FV}$ and $\mathcal{VF}.$ Since they are equivalent to each other, it remains only to iterate over all indecomposable words of length $4$ for which $[w] = [\bar{w}]$.

		The set of non-equivalent words of length $4$ is $w_1 = [ \mathcal{FFFF} ]$, $w_2 = [ \mathcal{FFFV} ]$, $w_3 = [ \mathcal{FFVV} ]$, $w_4 = [ \mathcal{FVFV} ]$, $w_5 = [ \mathcal{FVVV} ]$ and $w_6 = [ \mathcal{VVVV} ]$, and only the words $w_2$, $w_3$ and $w_5$ are indecomposable. Moreover, there is unique the word $w_3$  which is self-dual and therefore only this word corresponds to the quasi-polarized indecomposable ${\rm BT}_1$-group scheme, which we denote by ${\rm I}_{2, 1}$.
		
		For all rows of the table, the $a$-number is computed be use of the proposition \ref{ComputationOfa-number}, and one can observe that if $g = 5$, the situation arises repeatedly in which for one pair $(g, a)$ there are two non-isomorphic group schemes that correspond to the given parameters. It leads us to introducing additional notations to distinguish these group schemes.
		
	\end{subsection}

\end{section}

\begin{section}{Algorithm Foundation}\label{Algorithm_Foundation}

For the sake of completeness, let us recall the result from \cite{Zaytsev}~(section~4) and adjust it for our use.
		
	In this section, we recall the results of \cite{Zaytsev} demonstrating that the choice of a decomposition group and CM type derives a decomposition of the ${\rm BT}_1$-group scheme of a simple Abelian variety into irreducible ${\rm BT}_1$-group schemes. The scheme ${\rm A}[p]$ arises after the reduction of a CM Abelian variety $\mathcal{A}$ at a place of good reduction. Here we will develop an explicit representation theory on the basis of the approach of \cite{Ekedahl}~(section~2).
	
	Let $\mathcal{A}$ be an Abelian  scheme of relative dimension $g$ over ${\rm Spec}(\cO_{L})$, where $\cO_{L}$ is the full ring of integers of a number field $L$. Assume that $\mathcal{A}$ has a complex multiplication by the full ring of integers of a CM field $K$ and  $K \subset L$. Suppose that $\mathcal{A}$ has a good reduction at a prime ideal $\mathcal{B} \subset \cO_{L}$ and a number prime $p \in \mathcal{B} \cap \ZZ$ is unramified in $K$.

	Let $p\cO_{K}=P_{1} \ldots P_{m}$ be a decomposition into distinct prime ideals. Let $\tilde{K}$ be the Galois closure of $K$ over $\QQ$  and $p\cO_{K}= \tilde{P}_{1} \ldots \tilde{P}_{l}$ be the decomposition into  prime ideals in $\tilde{K}$ (note that $p$ is also unramified in the Galois closure $\tilde{K}$, since it is a composite of all embeddings of $K$ into fixed algebraic closure of $\QQ$).

	The ring $\cO_{K}$ is a free $\ZZ-$module and $\cO_{K} \otimes_\ZZ \QQ \cong K.$ The semi-simple $\bar{\QQ}$-algebra $\cO_{K} \otimes_\ZZ \bar{\QQ}$ can be decomposed into irreducible components
	\begin{displaymath}
		\cO_{K} \otimes_\ZZ \bar{\QQ}\cong\prod_{\alpha \in {\rm Hom}(K, \bar{\QQ})} \bar{\QQ},
	\end{displaymath}
	and hence it induces a decomposition into simple $\cO_{K} \otimes_\ZZ \bar{\FF}_p$-modules
	\begin{displaymath}
		\cO_{K} \otimes_\ZZ \bar{\FF}_p\cong\prod_{\alpha \in {\rm Hom}(K, \bar{\QQ})} \bar{\FF}_p,
	\end{displaymath}
	since $p$ is unramified in $K$. So the representation on $\bar{\mathbb{F}}_q$-vector space ${\mathbb D}(A[p])$ of the semi-simple algebra
	$\cO_{K}\otimes_\ZZ \bar{\FF}_p$   is a direct sum of irreducible representations
	\begin{displaymath}
		{\mathbb D}(A[p]) \cong \bigoplus_{\alpha \in {\rm Hom}(K, \CC)} V_{\alpha},
	\end{displaymath}
	where $V_{\alpha}$ is an irreducible $\cO_{K} \otimes_\ZZ \bar{\FF}_p$-module.

	In order to obtain an explicit description of the circular words, we reformulate this decomposition in terms of Galois actions. Let $G$ be a Galois group ${\rm Gal}(\tilde{K}/ \QQ)$, $\Delta={\rm Gal}(\tilde{K}/K)$, $\tilde{P}=\mathcal{B} \cap \cO_{\tilde{K}}$  and $\sigma$ be a generator of the decomposition group of a prime ideal $\tilde{P}$ in $\tilde{K}$. Then the decomposition of $\cO_{K} \otimes_\ZZ \bar{\FF}_p$ can be written as follows,
	\begin{gather*}
		\cO_{K}/p\cO_{K} \otimes_\ZZ \bar{\FF}_p\cong
		\bigoplus_{i=1}^{m} F_{P_i} \otimes_{\FF_p} \bar{\FF}_p \cong \\
		\bigoplus_{i=1}^{m}\left( \bigoplus_{\alpha \in {\rm Hom}(F_{P_i}, \bar{\FF}_p)}  \bar{\FF}_p\right)
	 	\cong\prod_{\alpha \in {\rm Hom}(K, \bar{\QQ})} \bar{\FF}_p,
	\end{gather*}
	where $F_{P_i}$ is the residue field of a prime ideal $P_i$. The last isomorphism comes from the fact that the embeddings $F_{P_i} \to \bar{\FF}_p$ are in one-to-one correspondence to embeddings $K \to \bar{\QQ}$ (since $p$ is unramified).
		
	Let us fix a prime ideal  $\tilde{P}=\tilde{P}_i$  and an isomorphism $\cO_{\tilde{K}}/ \tilde{P} \cong \FF_{q} \subset \bar{\FF}_{p}$. Then each $\alpha \in G$ induces  an embedding $\FF_{q}$ into  $\bar{\FF}_{p}$ by sending $(a \,{\rm mod } \, \tilde{P}) \mapsto (\alpha(a) \,{\rm mod } \, \tilde{P})$. Then we have the following decomposition of ${\mathbb D}(A[p])$ into $2g$ one-dimensional eigenspaces,
	$$
		{\mathbb D}(A[p])=\bigoplus_{\alpha \in G \setminus \Delta} {\rm V}_{\alpha},
	$$
	where $\alpha$ runs through all conjugate classes of $G$ by action of $\Delta$ on the right and
	$$
		{\rm V}_{\alpha}=\{v \in {\mathbb D}(A[p]) \, | \, a(v)= (\alpha(a) \,{\rm mod } \, \tilde{P}) v\qquad \mbox{for any}\, a \in \cO_{K}    \}.
	$$
	Since all $V_{\alpha}$ are isomorphic to each other, it follows that ${\rm dim}_{\bar{\FF}_p}V_{\alpha}=1$ for each $\alpha \in G \setminus \Delta$.

	The action of $\cO_K$ on ${\mathbb D}(A[p])$ is imposed by a fixed isomorphism $\cO_{K} \cong {\rm End}(A)$. The Frobenius ${\rm Fr}$ on ${\mathbb D}(A[p])$ is $p$-linear and commutes with the  $\cO_K$-action. Hence for each irreducible component $V_{\alpha}$ of ${\mathbb D}(A[p])$ there exists an irreducible component $V_{\beta}$ such that ${\rm Fr}(V_{\alpha}) \subset V_{\beta}$. Moreover, $\beta$ corresponds to the class of $\sigma \alpha$ in $G \setminus \Delta$ (since $\sigma(a) \equiv a^p\, \mbox{mod}\, \tilde{P}$). In other words,
	$$
		{\rm Fr}:{\rm V}_{\alpha} \rightarrow {\rm V}_{\sigma \alpha}.
	$$
		
	Let us denote the fibre product $(\mathcal{A} \, {\rm mod}\, \mathcal{B}) \times \bar{\FF}_{p}$ by ${\rm A}$. The set $S$ of  the isomorphism classes of irreducible factors of ${\mathbb D}(A[p])$ as an $\cO_K\otimes_{\ZZ} \bar{\FF}_p$-module can be identified with the set $\Hom(K, \bar{\QQ})$ via identification of the isomorphism classes of  irreducible representations of $\cO_K\otimes_{\ZZ} \bar{\FF}_p$ in $\bar{\FF}_p$ and the set $S$. There is an exact sequence of group schemes
	$$
		0 \rightarrow
		A[{\rm Ver}]
		\rightarrow
		A[p]
		\xrightarrow{\rm Ver}
		A[{\rm Fr}]
		\rightarrow 0,
	$$
	which yields an exact sequence of the Dieudonn\'e modules
	$$
		0 \rightarrow
		{\mathbb D}(A[{\rm Ver}])
		\rightarrow
		{\mathbb D}(A[p])
		\xrightarrow{\mathcal F}
		{\mathbb D}(A[{\rm Fr}])
		\rightarrow 0.
	$$

	Due to this exact sequence, the set $S$ is a disjoint union of $S^0$ and $S^1$, where $S^0$ and $S^1$ are the classes of irreducible representations occurring in ${\mathbb D}(A[{\rm Ver}])$ and ${\mathbb D}(A[{\rm Fr}])$, respectively.
	
	Let $S^{1}$ be a CM type of $\mathcal{A}$ and $S^{0}$ be the conjugation of the CM type. On the basis of these data we can draw a graph of a circular word for ${\rm BT}_1$ of an Abelian variety ${\rm A}$ over $\bar{\FF}_{p}.$ Vertices of $\Gamma$ are classes $G / \Delta$, and there is an arrow between a class $[\alpha] \in G / \Delta$ and $[\sigma \alpha]$, where $\sigma$ is a generator of the decomposition group. The arrow  $[\alpha]\to[\sigma \alpha]$ is labeled by $\mathcal{V}$  if $[\alpha] \in S^1$ and  $[\sigma \alpha]\to[\alpha]$ is labeled by $\mathcal{F}$ otherwise.
	
\end{section}

\begin{section}{Algorithm Description}\label{Algorithm_Description}

	\begin{subsection}{The Algorithm}
		
		Let all the notations be defined as in the previous section.
		
		For convenience, we introduce the notion characterizing the decomposition of a prime number $p$ in a CM field $K$. Let $K_0$ be a totally real subfield of $K$. By the \textbf{decomposition type} of $p$ in $K$ we mean a pair of non-negative integers $(\alpha, \beta)$ such that $p\cO_K = \mathcal{P}_1 ... \mathcal{P}_{\alpha}$ and $p\cO_{K_0} = \mathcal{Q}_1 ... \mathcal{Q}_{\beta}$, where $\mathcal{P}_i$ and $\mathcal{Q}_j$ are prime ideals (not necessarily distinct) that lie in $\cO_K$ and $\cO_{K_0}$ respectively.
	
		
		Now we describe an algorithm that searches for all possible non-isomorphic decompositions of the group scheme ${\rm A}[p]$ with the given Galois group $G = {\rm Gal}(\tilde{K} / \QQ)$ and the decomposition type $(\alpha, \beta)$ of prime $p$. According to the previous section, the algorithm describes actions of the $\mathcal{F}$ and $\mathcal{V}$ on the proper subspaces of the module $\mathbb D (A[p])$. The algorithm realization in the GAP computer algebra system can be found in \cite{Program}.
		
		
		Using pseudocode, the algorithm can be written as follows.
		
		\begin{algorithm}
		\caption{}\label{alg:Algorithm1}
		\begin{algorithmic}[1]
		
		\State $G = {\rm Gal}(\tilde{K} / \QQ)$
		\State $\iota List \gets {\rm GetInvolutions}(G)$
		\ForAll{$\iota \in \iota List$}
			\State $DeltaList \gets {\rm GetDeltaSubgroups}(G, \iota)$
			\ForAll{$\Delta \in DeltaList$}
				\State $H_0 \gets {\rm GetH_0Subgroup}(G, \iota, \Delta)$
				\State $CMTypesList \gets {\rm GetCMTypes}(G, \iota, \Delta)$
				\ForAll{$S^1 \in CMTypesList$}
					\ForAll{$\sigma \in G$}
						\State $\alpha \gets | \Delta \setminus G / \langle \sigma \rangle |$
						\State $\beta \gets | H_0 \setminus G / \langle \sigma \rangle |$
						\State $Words \gets GetWords(G, \iota, \Delta, S^1, \sigma)$
						\State \textbf{Print: } $(\alpha, \beta) \longrightarrow Words$
					\EndFor
				\EndFor
			\EndFor
		\EndFor
		
		\end{algorithmic}
		\end{algorithm}
		
		
		The algorithm begins with the fixation of an involution $\iota$ that belongs to the center of $G$. The element $\iota$ induces a complex conjugation of the CM field $K$. In this case, according to the fundamental theorem of Galois theory, the field $K$ corresponds to a subgroup $\Delta \subset G$ of order $|G|/(2g)$ that does not contain $\iota$. Besides, $\Delta$ could be a normal subgroup in $G$ if and only if $\Delta = \langle 1 \rangle$, i.e. when $K / \QQ$ is Galois.
	
		The subgroup $H_0$ corresponds to the subfield $K_0 \subset K$. Hence it could be constructed as the smallest subgroup that contains $\iota$ and $\Delta$.

		We select a primitive CM-type $S^1$ from all the CM-types that arise with given 
$G$, $\iota$ and $\Delta$ so the pair $(K, S^1)$ uniquely corresponds to a simple Abelian variety ${\rm A}$ over $K$.

		Finally we should choose a prime ideal $\tilde{\mathcal{P}} \subset \cO_{\tilde{K}}$ for which we want to establish correspondences between the decomposition types of $p = \tilde{\mathcal{P}} \cap \ZZ$ in $K$ and the decompositions of the group scheme ${\rm A}[p]$. For this, it suffices to fix the decomposition group $\mathcal{D}$ of the ideal $\tilde{\mathcal{P}},$ which is  a cyclic subgroup of $G$. Note that it could lead us to extra connections since the algorithm runs all possible $\sigma \in G$ which generate cyclic subgroups of $G$ and it doesn't exclude the situation in which the subgroup $\langle \sigma \rangle \subset G$ doesn't arise as a decomposition group of some $\tilde{\mathcal{P}}$. Therefore, it is important to remember that the algorithm only eliminates the impossible variants of the decomposition of the scheme ${\rm A}[p]$ for the given decomposition type of $p$ and it doesn't guarantee that all remaining decompositions are realisable for some Abelian variety $\mathcal{A}$ and prime $p$.
			
	\end{subsection}

	\begin{subsection}{The Main Result}
		
		
		We have applied the algorithm in order to explicitly obtain a full table of all the possible decompositions of the scheme ${\rm A}[p]$ with the given decomposition type $(\alpha, \beta)$ of prime $p$ in CM field $K$ for the dimensions $g = 1, 2, 3, 4, 5$.
		
		It is known that for any $g > 0$ there is only a finite number of possible Galois groups $G = {\rm Gal}(\tilde{K} / \QQ)$ such that $\tilde{K}$ is a Galois closure of an CM field $K$ of dimension $2g$ over $\QQ$. The paper \cite{Dodson} contains the complete lists of such groups for $g \leq 7$. Therefore, we can use the algorithm for each group from the list to obtain a set of all possible decompositions of the scheme ${\rm A}[p]$ that correspond to decompositions of prime $p$ that can arise for a simple Abelian variety of the given dimension. Recall that the algorithm will return decompositions of the ${\rm A}[p]$ as sets of circular words, so to obtain the final result we should use the table at the end of section \ref{Preliminaries_and_Classification}.

		The list of all the arising Galois groups for the $\tilde{K}$ with $K$ of dimension $g$ is presented in the following table. For brevity's sake, we denote the groups as in the GAP computer algebra system. So each group of order $n$ could be written as $G_{n, m}$, where $m$ is the second index of the group in the GAP Small Groups Library.
		
		\begin{center}
		\begin{longtable}{|c|c|c|}
		\hline
		
		{$\dim \mathcal{A}$} & {$|G|$} & {\sf groups list} \\
		\hline
		
		$2$ & $2$ & $G_{2, 1}$ \\
		\hline
		
		\multirow{2}{*}{$4$} & $4$ & $G_{4, 1}$ \\ \cline{2-3}
		& $8$ & $G_{8, 3}$ \\
		\hline
		
		\multirow{4}{*}{$6$} & $6$ & $G_{6, 2}$ \\ \cline{2-3}
		& $12$ & $G_{12, 4}$ \\ \cline{2-3}
		& $24$ & $G_{24, 13}$ \\ \cline{2-3}
		& $48$ & $G_{48, 48}$ \\
		\hline
		
		\multirow{10}{*}{$8$} & $8$ & $G_{8, 1}$; $G_{8, 2}$; $G_{8, 3}$; $G_{8, 4}$; $G_{8, 5}$ \\ \cline{2-3}
		& $16$ & $G_{16, 3}$; $G_{16, 6}$; $G_{16, 7}$; $G_{16, 8}$; $G_{16, 11}$; $G_{16, 13}$ \\ \cline{2-3}
		& $24$ & $G_{24, 3}$; $G_{24, 13}$ \\ \cline{2-3}
		& $32$ & $G_{32, 6}$; $G_{32, 7}$; $G_{32, 11}$; $G_{32, 27}$; $G_{32, 43}$; $G_{32, 49}$ \\ \cline{2-3}
		& $48$ & $G_{48, 29}$; $G_{48, 48}$ \\ \cline{2-3}
		& $64$ & $G_{64, 32}$; $G_{64, 34}$; $G_{64, 134}$; $G_{64, 138}$ \\ \cline{2-3}
		& $96$ & $G_{96, 204}$ \\ \cline{2-3}
		& $128$ & $G_{128, 928}$ \\ \cline{2-3}
		& $192$ & $G_{192, 201}$; $G_{192, 1493}$ \\ \cline{2-3}
		& $384$ & $G_{384, 5602}$ \\
		\hline
		
		\multirow{10}{*}{$10$} & $10$ & $G_{10, 2}$ \\ \cline{2-3}
		& $20$ & $G_{20, 4}$ \\ \cline{2-3}
		& $40$ & $G_{40, 12}$ \\ \cline{2-3}
		& $120$ & $G_{120, 35}$ \\ \cline{2-3}
		& $160$ & $G_{160, 235}$ \\ \cline{2-3}
		& $240$ & $G_{240, 189}$ \\ \cline{2-3}
		& $320$ & $G_{320, 1636}$ \\ \cline{2-3}
		& $640$ & $G_{640, 21536}$ \\ \cline{2-3}
		& $1920$ & $G_{1920, 240997}$ \\ \cline{2-3}
		& $3840$ & $G_{2, 1} \times G_{1920, 240996}$ \\
		\hline
		\end{longtable}
		\end{center}
		
		
		Let us look at the algorithm in more detail via the following example. Let $g = 5$ and $G = {\rm Gal}(\tilde{K} / \QQ) = G_{40, 12}$. With accuracy up to an isomorphism, we can assume that $G$ is generated by permutations $(2, 7)(3, 4, 8, 9)$ and $(1, 4, 3, 8)$ as a subgroup of order $40$ of the symmetric group $S_{10}$.
		
		The only non-trivial involution lying in the center of $G$ is the permutation ${\iota = (2, 7)}$.
		
		There are only $10$ subgroups of $G$ suitable to serve as $\Delta$. But, up to an automorphism of $G$ that preserves $\iota$, there is only one such subgroup. Therefore we would consider subgroup $\langle \delta \rangle$ as $\Delta$ where ${\delta = (3, 4, 8, 9)}$ has order $4$ in $G$. So the $H_0$ subgroup would be generated by the elements $\iota$ and $\delta$.
		
		The quotient $\Delta \setminus G$ consists of
		\begin{gather*}
			\{
				\Delta,
				\Delta (2, 7),
				\Delta (1, 3)(4, 8),
				\Delta (1, 3)(2, 7)(4, 8),
				\Delta (1, 4, 9, 3),
				\Delta (1, 4, 9, 3)(2, 7), \\
				\Delta (1, 8, 9, 4, 3),
				\Delta (1, 8, 9, 4, 3)(2, 7),
				\Delta (1, 9, 8, 3),
				\Delta (1, 9, 8, 3)(2, 7)
			\}
		\end{gather*}
		There are $32$ CM-types that can be constructed from the elements of this set. But only $16$ of them would not be pairwise conjugated. Besides, there is one primitive CM-type in this set and we don't need to examine it. Furthermore, if we consider only the CM-types which are not translated to each other by the automorphisms of $G$ then there are only $5$ different CM-types that are worth considering, namely:

		\begin{enumerate}
			\item[\textbf{(A)}] $S^1 = \{ \Delta, \Delta (1, 3)(4, 8), \Delta (1, 4, 9, 3), \Delta (1, 8, 9, 4, 3), \Delta (1, 9, 8, 3)(2, 7) \}$;
			\item[\textbf{(B)}] $S^1 = \{ \Delta, \Delta (1, 3)(4, 8), \Delta (1, 4, 9, 3), \Delta (1, 8, 9, 4, 3)(2, 7), \Delta (1, 9, 8, 3)(2, 7) \}$;
			\item[\textbf{(C)}] $S^1 = \{ \Delta, \Delta (1, 3)(4, 8), \Delta (1, 4, 9, 3)(2, 7), \Delta (1, 8, 9, 4, 3), \Delta (1, 9, 8, 3)(2, 7) \}$;
			\item[\textbf{(D)}] $S^1 = \{ \Delta, \Delta (1, 3)(4, 8), \Delta (1, 4, 9, 3)(2, 7), \Delta (1, 8, 9, 4, 3)(2, 7), \Delta (1, 9, 8, 3)(2, 7) \}$;
			\item[\textbf{(E)}] $S^1 = \{ \Delta, \Delta (1, 3)(2, 7)(4, 8), \Delta (1, 4, 9, 3)(2, 7), \Delta (1, 8, 9, 4, 3)(2, 7), \Delta (1, 9, 8, 3)(2, 7) \}$.
		\end{enumerate}
		
		Finally, with $G$, $\Delta \setminus G$ and $S^1$ we can construct the correspondence $${(\alpha, \beta)} \rightarrow \text{*\textit{set of circular words}*}$$ for each element ${\sigma \in G}$ using the explicit formulas from the algorithm and the end of section \ref{Algorithm_Foundation}. It leads us to the following table:
		
		\begin{center}
		\begin{longtable}{| c | c |}
		\hline
		{\sf ideal decomposition}
		& {\sf circular words} \\
		\hline
		
		\multirow{3}{*}{$\mathcal{P}$}
		& $[ \mathcal{FFFFFVVVVV} ]$ \\ \cline{2-2}
		& $[ \mathcal{FFVVFVVFFV} ]$ \\ \cline{2-2}
		& $[ \mathcal{FFFVFVVVFV} ]$ \\
		\hline
		
		\multirow{3}{*}{$\mathcal{P}\mathcal{P}^{c}$}
		& $[ \mathcal{FFFFV} ], [ \mathcal{VVVVF} ]$ \\ \cline{2-2}
		& $[ \mathcal{FFFVV} ], [ \mathcal{VVVFF} ]$ \\ \cline{2-2}
		& $[ \mathcal{FFVFV} ], [ \mathcal{VVFVF} ]$ \\
		\hline
		
		\multirow{4}{*}{$\mathcal{P}_{1}\mathcal{P}_{1}^{c}\mathcal{P}_{2}$}
		& $[ \mathcal{FV} ], [ \mathcal{F} ], [ \mathcal{V} ], [ \mathcal{F} ], [ \mathcal{V} ], [ \mathcal{F} ], [ \mathcal{V} ], [ \mathcal{F} ], [ \mathcal{V} ]$ \\ \cline{2-2}
		& $[ \mathcal{FFFV} ], [ \mathcal{VVVF} ], [ \mathcal{FV} ]$ \\ \cline{2-2}
		& $[ \mathcal{FFVV} ], [ \mathcal{FFVV} ], [ \mathcal{FV} ]$ \\ \cline{2-2}
		& $[ \mathcal{FV} ], [ \mathcal{FV} ], [ \mathcal{FV} ], [ \mathcal{FV} ], [ \mathcal{FV} ]$ \\
		\hline
		
		\multirow{4}{*}{$\mathcal{P}_{1}\mathcal{P}_{1}^{c}\mathcal{P}_{2}\mathcal{P}_{2}^{c}$}
		& $[ \mathcal{F} ], [ \mathcal{V} ], [ \mathcal{F} ], [ \mathcal{V} ], [ \mathcal{F} ], [ \mathcal{V} ], [ \mathcal{F} ], [ \mathcal{V} ], [ \mathcal{F} ], [ \mathcal{V} ]$ \\ \cline{2-2}
		& $[ \mathcal{FFFV} ], [ \mathcal{VVVF} ], [ \mathcal{F} ], [ \mathcal{V} ]$ \\ \cline{2-2}
		& $[ \mathcal{FFVV} ], [ \mathcal{FFVV} ], [ \mathcal{F} ], [ \mathcal{V} ]$ \\ \cline{2-2}
		& $[ \mathcal{FV} ], [ \mathcal{FV} ], [ \mathcal{FV} ], [ \mathcal{FV} ], [ \mathcal{F} ], [ \mathcal{V} ]$ \\
		\hline
		
		\multirow{3}{*}{$\mathcal{P}_{1}\mathcal{P}_{1}^{c}\mathcal{P}_{2}\mathcal{P}_{2}^{c}\mathcal{P}_{3}$}
		& $[ \mathcal{FV} ], [ \mathcal{F} ], [ \mathcal{V} ], [ \mathcal{F} ], [ \mathcal{V} ], [ \mathcal{F} ], [ \mathcal{V} ], [ \mathcal{F} ], [ \mathcal{V} ]$ \\ \cline{2-2}
		& $[ \mathcal{FV} ], [ \mathcal{FV} ], [ \mathcal{FV} ], [ \mathcal{F} ], [ \mathcal{V} ], [ \mathcal{F} ], [ \mathcal{V} ]$ \\ \cline{2-2}
		& $[ \mathcal{FV} ], [ \mathcal{FV} ], [ \mathcal{FV} ], [ \mathcal{FV} ], [ \mathcal{FV} ]$ \\
		\hline
		
		$\mathcal{P}_{1}\mathcal{P}_{2}\mathcal{P}_{3}\mathcal{P}_{4}\mathcal{P}_{5}$
		& $[ \mathcal{FV} ], [ \mathcal{FV} ], [ \mathcal{FV} ], [ \mathcal{FV} ], [ \mathcal{FV} ]$ \\
		\hline
		
		\multirow{3}{*}{$\mathcal{P}_{1}\mathcal{P}_{1}^{c}\mathcal{P}_{2}\mathcal{P}_{2}^{c}\mathcal{P}_{3}\mathcal{P}_{3}^{c}$}
		& $[ \mathcal{F} ], [ \mathcal{V} ], [ \mathcal{F} ], [ \mathcal{V} ], [ \mathcal{F} ], [ \mathcal{V} ], [ \mathcal{F} ], [ \mathcal{V} ], [ \mathcal{F} ], [ \mathcal{V} ]$ \\ \cline{2-2}
		& $[ \mathcal{FV} ], [ \mathcal{FV} ], [ \mathcal{F} ], [ \mathcal{V} ], [ \mathcal{F} ], [ \mathcal{V} ], [ \mathcal{F} ], [ \mathcal{V} ]$ \\ \cline{2-2}
		& $[ \mathcal{FV} ], [ \mathcal{FV} ], [ \mathcal{FV} ], [ \mathcal{FV} ], [ \mathcal{F} ], [ \mathcal{V} ]$ \\
		\hline
		
		$\mathcal{P}_{1}\mathcal{P}_{1}^{c}\mathcal{P}_{2}\mathcal{P}_{2}^{c}\mathcal{P}_{3}\mathcal{P}_{3}^{c}\mathcal{P}_{4}\mathcal{P}_{4}^{c}\mathcal{P}_{5}\mathcal{P}_{5}^{c}$
		& $[ \mathcal{F} ], [ \mathcal{V} ], [ \mathcal{F} ], [ \mathcal{V} ], [ \mathcal{F} ], [ \mathcal{V} ], [ \mathcal{F} ], [ \mathcal{V} ], [ \mathcal{F} ], [ \mathcal{V} ]$ \\
		\hline
		\end{longtable}
		\end{center}
		
		
		Thus by applying the algorithm to all Galois groups from the list above and by replacing the sets of circular words by the quasi-polarized ${\rm BT}_1$-group schemes, one obtains the following correspondences between the decomposition types of the prime $p$ and the decompositions of the scheme ${\rm A}[p]$.
		
		\begin{subsubsection}{Reproduction the result from an article \cite{Zaytsev} for $g = 1, 2, 3$}
		
		\begin{center}
		\begin{longtable}{| c | c | c | c |}
		\hline
		{\sf ideal decomposition}
		& {\sf group scheme decomposition}
		& {\sf $p$-rank}
		& {\sf $a$-number} \\
		\hline
		
		
		\multicolumn{4}{|c|}{\bf  Dimension 1}\\
		\hline
		
		$\mathcal{P}$
		& ${\rm I}_{1, 1}$
		& $0$
		& $1$ \\
		\hline
		
		$\mathcal{P}\mathcal{P}^{c}$
		& $\mu_p \times \ZZ / p\ZZ$
		& $1$
		& $0$ \\
		\hline
		
		
		\multicolumn{4}{|c|}{\bf Dimension 2}\\
		\hline
		
		$\mathcal{P}$
		& ${\rm I}_{2, 1}$
		& $0$
		& $1$ \\
		\hline
		
		\multirow{2}{*}{$\mathcal{P}\mathcal{P}^{c}$}
		& $(\mu_p \times \ZZ / p\ZZ)^2$
		& $2$
		& $0$ \\ \cline{2-4}
		& ${\rm I}_{1, 1}^2$
		& $0$
		& $2$ \\
		\hline
		
		$\mathcal{P}_{1}\mathcal{P}_{2}$
		& ${\rm I}_{1, 1}^2$
		& $0$
		& $2$ \\
		\hline
		
		$\mathcal{P}_{1}\mathcal{P}_{1}^{c}\mathcal{P}_2$
		& $(\mu_p \times \ZZ / p\ZZ) \times {\rm I}_{1, 1}$
		& $1$
		& $1$ \\
		\hline
		
		$\mathcal{P}_{1}\mathcal{P}_{1}^{c}\mathcal{P}_{2}\mathcal{P}_{2}^{c}$
		& $(\mu_p \times \ZZ / p\ZZ)^2$
		& $2$
		& $0$ \\
		\hline
		
		
		\multicolumn{4}{|c|}{\bf Dimension 3}\\
		\hline
		
		\multirow{2}{*}{$\mathcal{P}$}
		& ${\rm I}_{3, 1}$
		& $0$
		& $1$ \\ \cline{2-4}
		& ${\rm I}_{1, 1}^3$
		& $0$
		& $3$ \\
		\hline
		
		\multirow{2}{*}{$\mathcal{P}\mathcal{P}^{c}$}
		& $(\mu_p \times \ZZ / p\ZZ)^3$
		& $3$
		& $0$ \\ \cline{2-4}
		& ${\rm I}_{3, 2}$
		& $0$
		& $2$ \\
		\hline
		
		$\mathcal{P}_{1}\mathcal{P}_{2}$ &
		${\rm I}_{1, 1} \times {\rm I}_{2, 1}$
		& $0$
		& $2$ \\
		\hline
		
		\multirow{3}{*}{$\mathcal{P}_{1}\mathcal{P}_{1}^{c}\mathcal{P}_{2}$}
		& $(\mu_p \times \ZZ / p\ZZ)^2 \times {\rm I}_{1, 1}$
		& $2$
		& $1$ \\ \cline{2-4}
		& $(\mu_p \times \ZZ / p\ZZ) \times {\rm I}_{2, 1}$
		& $1$
		& $1$ \\ \cline{2-4}
		& ${\rm I}_{1, 1}^3$
		& $0$
		& $3$ \\
		\hline
		
		$\mathcal{P}_{1}\mathcal{P}_{2}\mathcal{P}_{3}$
		& ${\rm I}_{1, 1}^3$
		& $0$
		& $3$ \\
		\hline
		
		\multirow{2}{*}{$\mathcal{P}_{1}\mathcal{P}_{1}^{c}\mathcal{P}_{2}\mathcal{P}_{2}^{c}$}
		& $(\mu_p \times \ZZ / p\ZZ)^3$
		& $3$
		& $0$ \\ \cline{2-4}
		& $(\mu_p \times \ZZ / p\ZZ) \times {\rm I}_{1, 1}^2$
		& $1$
		& $2$ \\
		\hline
		
		$\mathcal{P}_{1}\mathcal{P}_{1}^{c}\mathcal{P}_{2}\mathcal{P}_{3}$
		& $(\mu_p \times \ZZ / p\ZZ) \times {\rm I}_{1, 1}^2$
		& $1$
		& $2$ \\
		\hline
		
		$\mathcal{P}_{1}\mathcal{P}_{1}^{c}\mathcal{P}_{2}\mathcal{P}_{2}^{c}\mathcal{P}_{3}$
		& $(\mu_p \times \ZZ / p\ZZ)^2 \times {\rm I}_{1, 1}$
		& $2$
		& $1$ \\
		\hline
		
		$\mathcal{P}_{1}\mathcal{P}_{1}^{c}\mathcal{P}_{2}\mathcal{P}_{2}^{c}\mathcal{P}_{3}\mathcal{P}_{3}^{c}$
		& $(\mu_p \times \ZZ / p\ZZ)^3$
		& $3$
		& $0$ \\
		\hline
		
		\end{longtable}
		\end{center}
		
		\end{subsubsection}
		
		\begin{subsubsection}{{\normalsize The new result for $g = 4$}}
		
		\normalsize
		\begin{center}
		\begin{longtable}{| c | c | c | c |}
		\hline
		{\sf ideal decomposition}
		& {\sf group scheme decomposition}
		& {\sf $p$-rank}
		& {\sf $a$-number} \\
		\hline
		
		\multirow{2}{*}{$\mathcal{P}$}
		& ${\rm I}_{4, 1}$
		& $0$
		& $1$ \\ \cline{2-4}
		& ${\rm I}_{4, 3}$
		& $0$
		& $3$ \\
		\hline
		
		\multirow{3}{*}{$\mathcal{P}\mathcal{P}^{c}$}
		& $(\mu_p \times \ZZ / p\ZZ)^4$
		& $4$
		& $0$ \\ \cline{2-4}
		& ${\rm I}_{4, 2}$,
		  ${\rm I}_{2, 1}^2$
		& $0$
		& $2$ \\ \cline{2-4}
		& ${\rm I}_{1, 1}^4$
		& $0$
		& $4$ \\
		\hline
		
		\multirow{2}{*}{$\mathcal{P}_{1}\mathcal{P}_{2}$}
		& ${\rm I}_{1, 1} \times {\rm I}_{3, 1}$,
		  ${\rm I}_{2, 1}^2$
		& $0$
		& $2$ \\ \cline{2-4}
		& ${\rm I}_{1, 1}^4$
		& $0$
		& $4$ \\
		\hline
		
		\multirow{5}{*}{$\mathcal{P}_{1}\mathcal{P}_{1}^{c}\mathcal{P}_{2}$}
		& $(\mu_p \times \ZZ / p\ZZ)^3 \times {\rm I}_{1, 1}$
		& $3$
		& $1$ \\ \cline{2-4}
		& $(\mu_p \times \ZZ / p\ZZ)^2 \times {\rm I}_{2, 1}$
		& $2$
		& $1$ \\ \cline{2-4}
		& $(\mu_p \times \ZZ / p\ZZ) \times {\rm I}_{3, 1}$
		& $1$
		& $1$ \\ \cline{2-4}
		& $(\mu_p \times \ZZ / p\ZZ) \times {\rm I}_{1, 1}^3$
		& $1$
		& $3$ \\ \cline{2-4}
		& ${\rm I}_{1, 1} \times {\rm I}_{3, 2}$,
		  ${\rm I}_{1, 1}^2 \times {\rm I}_{2, 1}$
		& $0$
		& $3$ \\
		\hline
		
		$\mathcal{P}_{1}\mathcal{P}_{2}\mathcal{P}_{3}$
		& ${\rm I}_{1, 1}^2 \times {\rm I}_{2, 1}$
		& $0$
		& $3$ \\
		\hline
		
		\multirow{4}{*}{$\mathcal{P}_{1}\mathcal{P}_{1}^{c}\mathcal{P}_{2}\mathcal{P}_{2}^{c}$}
		& $(\mu_p \times \ZZ / p\ZZ)^4$
		& $4$
		& $0$ \\ \cline{2-4}
		& $(\mu_p \times \ZZ / p\ZZ)^2 \times {\rm I}_{1, 1}^2$
		& $2$
		& $2$ \\ \cline{2-4}
		& $(\mu_p \times \ZZ / p\ZZ) \times {\rm I}_{3, 2}$
		& $1$
		& $2$ \\ \cline{2-4}
		& ${\rm I}_{1, 1}^4$
		& $0$
		& $4$ \\
		\hline
		
		\multirow{3}{*}{$\mathcal{P}_{1}\mathcal{P}_{1}^{c}\mathcal{P}_{2}\mathcal{P}_{3}$}
		& $(\mu_p \times \ZZ / p\ZZ)^2 \times {\rm I}_{1, 1}^2$
		& $2$
		& $2$ \\ \cline{2-4}
		& $(\mu_p \times \ZZ / p\ZZ) \times {\rm I}_{1, 1} \times {\rm I}_{2, 1}$
		& $1$
		& $2$ \\ \cline{2-4}
		& ${\rm I}_{1, 1}^4$
		& $0$
		& $4$ \\
		\hline
		
		$\mathcal{P}_{1}\mathcal{P}_{2}\mathcal{P}_{3}\mathcal{P}_{4}$
		& ${\rm I}_{1, 1}^4$
		& $0$
		& $4$ \\
		\hline
		
		\multirow{3}{*}{$\mathcal{P}_{1}\mathcal{P}_{1}^{c}\mathcal{P}_{2}\mathcal{P}_{2}^{c}\mathcal{P}_{3}$}
		& $(\mu_p \times \ZZ / p\ZZ)^3 \times {\rm I}_{1, 1}$
		& $3$
		& $1$ \\ \cline{2-4}
		& $(\mu_p \times \ZZ / p\ZZ)^2 \times {\rm I}_{2, 1}$
		& $2$
		& $1$ \\ \cline{2-4}
		& $(\mu_p \times \ZZ / p\ZZ) \times {\rm I}_{1, 1}^3$
		& $1$
		& $3$ \\
		\hline
		
		$\mathcal{P}_{1}\mathcal{P}_{1}^{c}\mathcal{P}_{2}\mathcal{P}_{3}\mathcal{P}_{4}$
		& $(\mu_p \times \ZZ / p\ZZ) \times {\rm I}_{1, 1}^3$
		& $1$
		& $3$ \\
		\hline
		
		\multirow{2}{*}{$\mathcal{P}_{1}\mathcal{P}_{1}^{c}\mathcal{P}_{2}\mathcal{P}_{2}^{c}\mathcal{P}_{3}\mathcal{P}_{3}^{c}$}
		& $(\mu_p \times \ZZ / p\ZZ)^4$
		& $4$
		& $0$ \\ \cline{2-4}
		& $(\mu_p \times \ZZ / p\ZZ)^2 \times {\rm I}_{1, 1}^2$
		& $2$
		& $2$ \\
		\hline
		
		$\mathcal{P}_{1}\mathcal{P}_{1}^{c}\mathcal{P}_{2}\mathcal{P}_{2}^{c}\mathcal{P}_{3}\mathcal{P}_{4}$
		& $(\mu_p \times \ZZ / p\ZZ)^2 \times {\rm I}_{1, 1}^2$
		& $2$
		& $2$ \\
		\hline
		
		$\mathcal{P}_{1}\mathcal{P}_{1}^{c}\mathcal{P}_{2}\mathcal{P}_{2}^{c}\mathcal{P}_{3}\mathcal{P}_{3}^{c}\mathcal{P}_{4}$
		& $(\mu_p \times \ZZ / p\ZZ)^3 \times {\rm I}_{1, 1}$
		& $3$
		& $1$ \\
		\hline
		
		$\mathcal{P}_{1}\mathcal{P}_{1}^{c}\mathcal{P}_{2}\mathcal{P}_{2}^{c}\mathcal{P}_{3}\mathcal{P}_{3}^{c}\mathcal{P}_{4}\mathcal{P}_{4}^{c}$
		& $(\mu_p \times \ZZ / p\ZZ)^4$
		& $4$
		& $0$ \\
		\hline
		
		\end{longtable}
		\end{center}
		\normalsize
		
		\end{subsubsection}
		
		\begin{subsubsection}{{\normalsize The new result for $g = 5$}}
		
		\small
		\begin{center}
		\begin{longtable}{| c | c | c | c |}
		\hline
		{\sf ideal decomposition}
		& {\sf group scheme decompotision}
		& {\sf $p$-rank}
		& {\sf $a$-number} \\
		\hline
		
		\multirow{3}{*}{$\mathcal{P}$}
		& ${\rm I}_{5, 1}$
		& $0$
		& $1$ \\ \cline{2-4}
		& ${\rm I}_{5, 3}$,
		  ${\rm J}_{5, 3}$
		& $0$
		& $3$ \\ \cline{2-4}
		& ${\rm I}_{1, 1}^5$
		& $0$
		& $5$ \\
		\hline
		
		\multirow{3}{*}{$\mathcal{P}\mathcal{P}^{c}$}
		& $(\mu_p \times \ZZ / p\ZZ)^5$
		& $5$
		& $0$ \\ \cline{2-4}
		& ${\rm I}_{5, 2}$,
		  ${\rm J}_{5, 2}$
		& $0$
		& $2$ \\ \cline{2-4}
		& ${\rm I}_{5, 4}$
		& $0$
		& $4$ \\
		\hline
		
		\multirow{2}{*}{$\mathcal{P}_{1}\mathcal{P}_{2}$}
		& ${\rm I}_{1, 1} \times {\rm I}_{4, 1}$,
		  ${\rm I}_{2, 1} \times {\rm I}_{3, 1}$
		& $0$
		& $2$ \\ \cline{2-4}
		& ${\rm I}_{1, 1} \times {\rm I}_{4, 3}$,
		  ${\rm I}_{1, 1}^3 \times {\rm I}_{2, 1}$
		& $0$
		& $4$ \\
		\hline
		
		\multirow{8}{*}{$\mathcal{P}_{1}\mathcal{P}_{1}^{c}\mathcal{P}_{2}$}
		& $(\mu_p \times \ZZ / p\ZZ)^4 \times {\rm I}_{1, 1}$
		& $4$
		& $1$ \\ \cline{2-4}
		& $(\mu_p \times \ZZ / p\ZZ)^3 \times {\rm I}_{2, 1}$
		& $3$
		& $1$ \\ \cline{2-4}
		& $(\mu_p \times \ZZ / p\ZZ)^2 \times {\rm I}_{3, 1}$
		& $2$
		& $1$ \\ \cline{2-4}
		& $(\mu_p \times \ZZ / p\ZZ) \times {\rm I}_{4, 1}$
		& $1$
		& $1$ \\ \cline{2-4}
		& $(\mu_p \times \ZZ / p\ZZ) \times {\rm I}_{4, 3}$
		& $1$
		& $3$ \\ \cline{2-4}
		& ${\rm I}_{1, 1} \times {\rm I}_{4, 2}$,
		  ${\rm I}_{3, 2} \times {\rm I}_{2, 1}$,
		& \multirow{2}{*}{$0$}
		& \multirow{2}{*}{$3$} \\
		& ${\rm I}_{1, 1}^2 \times {\rm I}_{3, 1}$,
		  ${\rm I}_{1, 1} \times {\rm I}_{2, 1}^2$
		&
		& \\ \cline{2-4}
		& ${\rm I}_{1, 1}^5$
		& $0$
		& $5$ \\
		\hline
		
		\multirow{2}{*}{$\mathcal{P}_{1}\mathcal{P}_{2}\mathcal{P}_{3}$}
		& ${\rm I}_{1, 1}^2 \times {\rm I}_{3, 1}$,
		  ${\rm I}_{1, 1} \times {\rm I}_{2, 1}^2$
		& $0$
		& $3$ \\ \cline{2-4}
		& ${\rm I}_{1, 1}^5$
		& $0$
		& $5$ \\
		\hline
		
		\multirow{7}{*}{$\mathcal{P}_{1}\mathcal{P}_{1}^{c}\mathcal{P}_{2}\mathcal{P}_{2}^{c}$}
		& $(\mu_p \times \ZZ / p\ZZ)^5$
		& $5$
		& $0$ \\ \cline{2-4}
		& $(\mu_p \times \ZZ / p\ZZ)^3 \times {\rm I}_{1, 1}^2$
		& $3$
		& $2$ \\ \cline{2-4}
		& $(\mu_p \times \ZZ / p\ZZ)^2 \times {\rm I}_{3, 2}$
		& $2$
		& $2$ \\ \cline{2-4}
		& $(\mu_p \times \ZZ / p\ZZ) \times {\rm I}_{4, 2}$,
		& \multirow{2}{*}{$1$}
		& \multirow{2}{*}{$2$} \\
		& $(\mu_p \times \ZZ / p\ZZ) \times {\rm I}_{2, 1}^2$
		&
		& \\ \cline{2-4}
		& $(\mu_p \times \ZZ / p\ZZ) \times {\rm I}_{1, 1}^4$
		& $1$
		& $4$ \\ \cline{2-4}
		& ${\rm I}_{1, 1}^2 \times {\rm I}_{3, 2}$
		& $0$
		& $4$ \\
		\hline
		
		\multirow{6}{*}{$\mathcal{P}_{1}\mathcal{P}_{1}^{c}\mathcal{P}_{2}\mathcal{P}_{3}$}
		& $(\mu_p \times \ZZ / p\ZZ)^3 \times {\rm I}_{1, 1}^2$
		& $3$
		& $2$ \\ \cline{2-4}
		& $(\mu_p \times \ZZ / p\ZZ)^2 \times {\rm I}_{1, 1} \times {\rm I}_{2, 1}$
		& $2$
		& $2$ \\ \cline{2-4}
		& $(\mu_p \times \ZZ / p\ZZ) \times {\rm I}_{1, 1} \times {\rm I}_{3, 1}$,
		& \multirow{2}{*}{$1$}
		& \multirow{2}{*}{$2$} \\
		& $(\mu_p \times \ZZ / p\ZZ) \times {\rm I}_{2, 1}^2$
		&
		& \\ \cline{2-4}
		& $(\mu_p \times \ZZ / p\ZZ) \times {\rm I}_{1, 1}^4$
		& $1$
		& $4$ \\ \cline{2-4}
		& ${\rm I}_{1, 1}^2 \times {\rm I}_{3, 2}$,
		  ${\rm I}_{1, 1}^3 \times {\rm I}_{2, 1}$
		& $0$
		& $4$ \\
		\hline
		
		$\mathcal{P}_{1}\mathcal{P}_{2}\mathcal{P}_{3}\mathcal{P}_{4}$
		& ${\rm I}_{1, 1}^3 \times {\rm I}_{2, 1}$
		& $0$
		& $4$ \\
		\hline
		
		\multirow{7}{*}{$\mathcal{P}_{1}\mathcal{P}_{1}^{c}\mathcal{P}_{2}\mathcal{P}_{2}^{c}\mathcal{P}_{3}$}
		& $(\mu_p \times \ZZ / p\ZZ)^4 \times {\rm I}_{1, 1}$
		& $4$
		& $1$ \\ \cline{2-4}
		& $(\mu_p \times \ZZ / p\ZZ)^3 \times {\rm I}_{2, 1}$
		& $3$
		& $1$ \\ \cline{2-4}
		& $(\mu_p \times \ZZ / p\ZZ)^2 \times {\rm I}_{3, 1}$
		& $2$
		& $1$ \\ \cline{2-4}
		& $(\mu_p \times \ZZ / p\ZZ)^2 \times {\rm I}_{1, 1}^3$
		& $2$
		& $3$ \\ \cline{2-4}
		& $(\mu_p \times \ZZ / p\ZZ) \times {\rm I}_{1, 1} \times {\rm I}_{3, 2}$,
		& \multirow{2}{*}{$1$}
		& \multirow{2}{*}{$3$} \\
		& $(\mu_p \times \ZZ / p\ZZ) \times {\rm I}_{1, 1}^2 \times {\rm I}_{2, 1}$
		&
		& \\ \cline{2-4}
		& ${\rm I}_{1, 1}^5$
		& $0$
		& $5$ \\
		\hline
		
		\multirow{3}{*}{$\mathcal{P}_{1}\mathcal{P}_{1}^{c}\mathcal{P}_{2}\mathcal{P}_{3}\mathcal{P}_{4}$}
		& $(\mu_p \times \ZZ / p\ZZ)^2 \times {\rm I}_{1, 1}^3$
		& $2$
		& $3$ \\ \cline{2-4}
		& $(\mu_p \times \ZZ / p\ZZ) \times {\rm I}_{1, 1}^2 \times {\rm I}_{2, 1}$
		& $1$
		& $3$ \\ \cline{2-4}
		& ${\rm I}_{1, 1}^5$
		& $0$
		& $5$ \\
		\hline
		
		$\mathcal{P}_{1}\mathcal{P}_{2}\mathcal{P}_{3}\mathcal{P}_{4}\mathcal{P}_{5}$
		& ${\rm I}_{1, 1}^5$
		& $0$
		& $5$ \\
		\hline
		
		\multirow{4}{*}{$\mathcal{P}_{1}\mathcal{P}_{1}^{c}\mathcal{P}_{2}\mathcal{P}_{2}^{c}\mathcal{P}_{3}\mathcal{P}_{3}^{c}$}
		& $(\mu_p \times \ZZ / p\ZZ)^5$
		& $5$
		& $0$ \\ \cline{2-4}
		& $(\mu_p \times \ZZ / p\ZZ)^3 \times {\rm I}_{1, 1}^2$
		& $3$
		& $2$ \\ \cline{2-4}
		& $(\mu_p \times \ZZ / p\ZZ)^2 \times {\rm I}_{3, 2}$
		& $2$
		& $2$ \\ \cline{2-4}
		& $(\mu_p \times \ZZ / p\ZZ) \times {\rm I}_{1, 1}^4$
		& $1$
		& $4$ \\
		\hline
		
		\multirow{3}{*}{$\mathcal{P}_{1}\mathcal{P}_{1}^{c}\mathcal{P}_{2}\mathcal{P}_{2}^{c}\mathcal{P}_{3}\mathcal{P}_{4}$}
		& $(\mu_p \times \ZZ / p\ZZ)^3 \times {\rm I}_{1, 1}^2$
		& $3$
		& $2$ \\ \cline{2-4}
		& $(\mu_p \times \ZZ / p\ZZ)^2 \times {\rm I}_{1, 1} \times {\rm I}_{2, 1}$
		& $2$
		& $2$ \\ \cline{2-4}
		& $(\mu_p \times \ZZ / p\ZZ) \times {\rm I}_{1, 1}^4$
		& $1$
		& $4$ \\
		\hline
		
		$\mathcal{P}_{1}\mathcal{P}_{1}^{c}\mathcal{P}_{2}\mathcal{P}_{3}\mathcal{P}_{4}\mathcal{P}_{5}$
		& $(\mu_p \times \ZZ / p\ZZ) \times {\rm I}_{1, 1}^4$
		& $1$
		& $4$ \\
		\hline
		
		\multirow{3}{*}{$\mathcal{P}_{1}\mathcal{P}_{1}^{c}\mathcal{P}_{2}\mathcal{P}_{2}^{c}\mathcal{P}_{3}\mathcal{P}_{3}^{c}\mathcal{P}_{4}$}
		& $(\mu_p \times \ZZ / p\ZZ)^4 \times {\rm I}_{1, 1}$
		& $4$
		& $1$ \\ \cline{2-4}
		& $(\mu_p \times \ZZ / p\ZZ)^3 \times {\rm I}_{2, 1}$
		& $3$
		& $1$ \\ \cline{2-4}
		& $(\mu_p \times \ZZ / p\ZZ)^2 \times {\rm I}_{1, 1}^3$
		& $2$
		& $3$ \\
		\hline
		
		$\mathcal{P}_{1}\mathcal{P}_{1}^{c}\mathcal{P}_{2}\mathcal{P}_{2}^{c}\mathcal{P}_{3}\mathcal{P}_{4}\mathcal{P}_{5}$
		& $(\mu_p \times \ZZ / p\ZZ)^2 \times {\rm I}_{1, 1}^3$
		& $2$
		& $3$ \\
		\hline
		
		\multirow{2}{*}{$\mathcal{P}_{1}\mathcal{P}_{1}^{c}\mathcal{P}_{2}\mathcal{P}_{2}^{c}\mathcal{P}_{3}\mathcal{P}_{3}^{c}\mathcal{P}_{4}\mathcal{P}_{4}^{c}$}
		& $(\mu_p \times \ZZ / p\ZZ)^5$
		& $5$
		& $0$ \\ \cline{2-4}
		& $(\mu_p \times \ZZ / p\ZZ)^3 \times {\rm I}_{1, 1}^2$
		& $3$
		& $2$ \\
		\hline
		
		$\mathcal{P}_{1}\mathcal{P}_{1}^{c}\mathcal{P}_{2}\mathcal{P}_{2}^{c}\mathcal{P}_{3}\mathcal{P}_{3}^{c}\mathcal{P}_{4}\mathcal{P}_{5}$
		& $(\mu_p \times \ZZ / p\ZZ)^3 \times {\rm I}_{1, 1}^2$
		& $3$
		& $2$ \\
		\hline
		
		$\mathcal{P}_{1}\mathcal{P}_{1}^{c}\mathcal{P}_{2}\mathcal{P}_{2}^{c}\mathcal{P}_{3}\mathcal{P}_{3}^{c}\mathcal{P}_{4}\mathcal{P}_{4}^{c}\mathcal{P}_{5}$
		& $(\mu_p \times \ZZ / p\ZZ)^4 \times {\rm I}_{1, 1}$
		& $4$
		& $1$ \\
		\hline
		
		$\mathcal{P}_{1}\mathcal{P}_{1}^{c}\mathcal{P}_{2}\mathcal{P}_{2}^{c}\mathcal{P}_{3}\mathcal{P}_{3}^{c}\mathcal{P}_{4}\mathcal{P}_{4}^{c}\mathcal{P}_{5}\mathcal{P}_{5}^{c}$
		& $(\mu_p \times \ZZ / p\ZZ)^5$
		& $5$
		& $0$ \\
		\hline
		
		\end{longtable}
		\end{center}
		\normalsize
		
		\end{subsubsection}
		
	\end{subsection}

\end{section}

\bibliographystyle{plain}

\begin{thebibliography}{10}
	
	\bibitem{Blake}
	C.~Blake.
	\newblock {\em A Deuring criterion for Abelian varieties.}
	\newblock {Bulletin of the London Mathematical Society}, 46(6):1256--1263, 2014.
	
	\bibitem{Dodson}
	B.~Dodson.
	\newblock {\em The structure of Galois groups of CM-fields.}
	\newblock {Trans. Am. Math. Soc.}, 283:1--32, 1984.
	
	\bibitem{Ekedahl}
	T. Ekedahl.
	\newblock {\em On supersingular curves and Abelian varieties.}
	\newblock {Math. Scand.}, 60:151--178, 1987.
	
	\bibitem{Goren}
	Eyal~Z. Goren.
	\newblock {\em On certain reduction problems concerning Abelian surfaces.}
	\newblock {Manuscr. Math.}, 94(1):33--43, 1997.
	
	\bibitem{Kraft}
	H.~Kraft.
	\newblock {\em Kommutative Algebraische p-Gruppen (mit Anwendungen auf
	  p-divisible Gruppen und abelsche Variet\"aten)}.
	\newblock Sonderforsch. Bereich Bonn. 1975.
	
	\bibitem{Milne}
	J.~Milne.
	\newblock {\em Complex multiplication.}
	\newblock 2006.
	\newblock
	
	\bibitem{Moree}
	P.~Moree.
	\newblock {\em The formal series Witt transform.}
	\newblock {Discrete Mathematics},
	295(1-3):143–-160, 2005.
	
	\bibitem{Oort}
	F.~Oort.
	\newblock {\em Simple $p$-kernels of $p$-divisible groups.}
	\newblock {Adv. Math.}, 198(1):275--310, 2005.
	
	\bibitem{Pink}
	R.~Pink.
	\newblock {\em Finite Group schemes.}
	\newblock 2004/05.
	\newblock http://www.math.ethz.ch/~pink/FiniteGroupSchemes.html.
	
	\bibitem{Pries}
	R.~Pries.
	\newblock {\em A short guide to $p$-torsion of Abelian varieties in characteristic $p$.}
	\newblock 2006/09.
	\newblock http://arxiv.org/abs/math/0609658v1.
	
	\bibitem{Sugiyama}
	K.-I.~Sugiyama.
	\newblock {\em On a generalization of Deuring's results}
	\newblock {Finite Fields and Their Applications}, 26:69--85, 2014.
	
	\bibitem{Zaytsev}
	A.~Zaytsev.
	\newblock {\em Generalization of Deuring reduction theorem.}
	\newblock {Journal of Algebra}, 2012.
	
	\bibitem{Program}
	Source code of the program\footnote{
		Tested in GAP v4.8.6.
	}:
	\newblock {\em https://github.com/asmirnov1005/crtav}.
	
\end{thebibliography}

\def\cprime{$'$}

\end{document}